\theoremstyle{plain}
\newtheorem{theorem}{Theorem}[section]
\newtheorem{lemma}[theorem]{Lemma}
\newtheorem{corollary}[theorem]{Corollary}
\newtheorem{proposition}[theorem]{Proposition}
\newtheorem{conjecture}[theorem]{Conjecture}
\theoremstyle{definition}
\newtheorem{definition}[theorem]{Definition}
\newtheoremstyle{TheoremNum}
	{\topsep}{\topsep}              %%% space between body and thm
  {\itshape}                      %%% Thm body font
  {}                              %%% Indent amount (empty = no indent)
  {\bfseries}                     %%% Thm head font
  {.}                             %%% Punctuation after thm head
  { }                             %%% Space after thm head
  {\thmname{#1}\thmnote{ \bfseries #3}}%%% Thm head spec
\newcommand{\F}{\mathbb F}
\newcommand{\K}{\mathbb K}
\newcommand{\cK}{\mathcal K}
\newcommand{\U}{\mathcal U}
\newcommand{\Oval}{\mathcal O}
\newcommand{\sq}{\square_q}
\newcommand{\Tr}{ \ensuremath{ \mathrm{Tr}}}
\newcommand{\RN}[1]{%
  \textup{\uppercase\expandafter{\romannumeral#1}}%
}
 \def\zhou#1 {\fbox {\footnote {\ }}\ \footnotetext { From Yue: {\color{blue}#1}}}
 \def\trom#1 {\fbox {\footnote {\ }}\ \footnotetext { From Rocco: {\color{blue}#1}}}
\begin{document}
	\title{On the $2$-ranks of a class of unitals}
	\author[R. Trombetti]{Rocco Trombetti}
	\address{Dipartimento di Mathematica e Applicazioni ``R. Caccioppoli", Universit\`{a} degli Studi di Napoli ``Federico \RN{2}", I-80126 Napoli, Italy}
	\email{rtrombet@unina.it}
	\author[Y. Zhou]{Yue Zhou}
	\address{Dipartimento di Mathematica e Applicazioni ``R. Caccioppoli", Universit\`{a} degli Studi di Napoli ``Federico \RN{2}", I-80126 Napoli, Italy}
	\email{yue.zhou.ovgu@gmail.com}
	%\date{\today}
	
	\begin{abstract}
		Let $\U_\theta$ be a unital defined in a shift plane of odd order $q^2$, which are constructed recently in \cite{trombetti_unitals_2015}. In particular, when the shift plane is desarguesian, $\U_\theta$ is a special Buekenhout-Metz unital formed by a union of ovals. We investigate the dimensions of the binary codes derived from $\U_\theta$.
		By using Kloosterman sums, we obtain a new lower bound on the aforementioned dimensions which	
		improves Leung and Xiang's result \cite{leung_dimensions_2009,leung_erratum_2011}. In particular, for $q=3^m$, this new lower bound equals $\frac{2}{3}(q^3+q^2-2q)-1$ for even $m$ and $\frac{2}{3}(q^3+q^2+q)-1$ for odd $m$.
	\end{abstract}
	\keywords{Unital; binary code; shift plane; Kloosterman sum}
	\subjclass[2010]{51A45, 12K10, 51A35, 11L05}
	\maketitle
\section{Introduction}
Let $m$ be an integer larger than or equal to $3$. A \emph{unital} of order $m$ is a $2$-$(m^3+1, m+1, 1)$ design, i.e.\ a set of $m^3+1$ points arranged into subsets of size $m+1$ such that each pair of distinct points are contained in exactly one of these subsets. 

Most of the known unitals can be embedded in a projective plane $\Pi$ of order $q^2$. In such a case, the \emph{embedded unital} is a set $\U$ of $q^3+1$ points such that each line of $\Pi$ intersects $\U$ in $1$ or $q+1$ points. When $\Pi$ is the desarguesian projective plane $\mathrm{PG}(2,q^2)$, the set of absolute points of a unitary polarity, or equivalently speaking, the rational points on a nondegenerate Hermitian curve form a \emph{classical} unital. There are also non-classical unitals in $\mathrm{PG}(2,q^2)$, for instance the Buekenhout-Metz unitals \cite{buekenhout_characterizations_1976}, as well as the unitals which can not be embedded in a projective plane, such as the Ree unitals \cite{luneburg_remarks_1966}. Moreover, it is not necessary that the order of a unital is a prime power, for instance, the order of the unitals discovered in \cite{bagchi_designs_1989} equals $6$.

Unitals also exist in non-desarguesian planes. For instance, there are unitals derived from unitary polarities in various translation planes and shift planes; see \cite{abatangelo_ovals_1999,abatangelo_polarity_2002,ganley_class_1972,ganley_polarities_1972,hui_non-classical_2013,knarr_polarities_2010}. Commutative semifield planes, as a special type of translation and shift  planes, also contain the unitals, which are analogous to the Buekenhout-Metz ones in desarguesian planes; see \cite{abatangelo_transitive_2001,zhou_parabolic_2015}. 

Recently in \cite{trombetti_unitals_2015}, the authors investigate the existence and properties of a special type of unitals $\U_\theta$ consisting of ovals  in shift planes $\Pi(f)$ of odd orders in terms of planar functions $f$ on $\F_{q^2}$. In particular, when the planar function $f(x)=x^2$, the shift plane $\Pi(f)$ is desarguesian and the unital $\U_\theta$ is exactly the one independently discovered by Hirschfeld and Sz\"{o}nyi \cite{hirschfeld_sets_1991} and by Baker and Ebert \cite{baker_intersection_1990}, which forms a special subclass of the Buekenhout-Metz unitals in desarguesian planes; see \cite{trombetti_unitals_2015} or Section \ref{sec:construction} for more details.

Generally, a \emph{linear code} is an arbitrary subspace of a vector space over a field. The \emph{dimension} of a linear code is the dimension of the corresponding subspace. 

Let $\U$ be a unital, namely, a $(q^3+1,q+1,1)$-design. For any prime number $p$, let $C_p(\U)$ be the subspace spanned by the characteristic vectors of the blocks of $\U$ in $\F_p^{q^3+1}$. Here the characteristic vector $v^B$ of a subset $B$ of the point set of $\U$, is the vector in $\F_p^{q^3+1}$ with coordinate $1$ in those positions corresponding to the elements in $B$ and with coordinate $0$ in all other positions.

The dimension of $C_p(\U)$ is also called the \emph{$p$-rank} of the design $\U$. It is worth noting that, as a design, $\U$ is of order $q^2-1$. By \cite[Theorem 2.4.1]{assmus_designs_1992},  $C_p(\U)$ is interesting only when $p\mid (q^2-1)$. In this paper, we consider in the value of $C_2(\U_\theta)$, where $\U_\theta$ is a unital in a shift plane $\Pi(f)$ of odd order constructed in \cite{trombetti_unitals_2015}; see Section \ref{sec:construction} too. As we mentioned previously, when $f(x)=x^2$, our unitals correspond to a special subclass of the Buekenhout-Metz unitals in desarguesian planes. Baker and Wantz made the following conjecture.
\begin{conjecture}\label{conjecture:2rank}
	When $f(x)=x^2$, $\dim C_2(\U_\theta)=q^3-q+1$.
\end{conjecture}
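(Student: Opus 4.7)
Because the conjecture concerns the desarguesian case $f(x) = x^2$, the shift plane is $\mathrm{PG}(2,q^2)$ and $\U_\theta$ is the Buekenhout--Metz unital swept out by the translates of a single oval $\Oval$ under a one-parameter subgroup of the translation group $T \cong (\F_{q^2},+)^2$. The plan is to realize $C_2(\U_\theta)$ as a module over the semisimple group algebra $\F_2[T]$ and then analyze it via characters. First, I would parametrize the affine part of $\U_\theta$ as a subset $D \subseteq T$, describe explicitly the intersections of the affine lines with $\U_\theta$, and group the secant blocks into a small number of $T$-orbits $\{ t + D_i : t \in T \}$, keeping separate track of the unique point at infinity that lies on $\U_\theta$.

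Next I would apply the Fourier transform. Since $|T|$ is odd, $\F_2[T]$ decomposes as $\prod_j \F_{2^{d_j}}$, the factors being indexed by $\Gal(\overline{\F_2}/\F_2)$-orbits of characters of $T$, and a codeword $c$ projects to $\widehat{c}(\chi_j)$ in each factor. The dimension of $C_2(\U_\theta)$ then equals the sum of the sizes $d_j$ of those orbits for which some $\widehat{\mathbf{1}_{D_i}}(\chi_j) \ne 0$, plus a controlled correction accounting for the block containing the point at infinity. Thus the task reduces to counting characters $\chi$ of $T$ for which every $\widehat{\mathbf{1}_{D_i}}(\chi)$ vanishes in characteristic two.

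The core calculation is to show that these Fourier coefficients, after unraveling the oval parametrization of $\Oval$, reduce to Kloosterman-type exponential sums $K_q(a,b) = \sum_{x \in \F_q^\ast} \psi(ax + b x^{-1})$ over $\F_q$. The condition $\widehat{\mathbf{1}_{D_i}}(\chi) \equiv 0 \pmod{2}$ becomes a $2$-adic divisibility statement for $K_q(a,b)$, which can be controlled via Stickelberger-style congruences; in the case $q = 3^m$, one uses the particularly clean arithmetic of $2$ in $\mathbb{Z}[\zeta_3]$ to sharpen the count and recover the lower bound claimed in the abstract.

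The main obstacle is the matching upper bound. The Kloosterman method can only certify those characters where the Fourier coefficients are provably nonzero; it cannot by itself rule out further ``hidden'' vanishing modulo $2$ that would actually shrink the code. Closing the gap between the Kloosterman lower bound and the conjectured value $q^3 - q + 1$ appears to require either exact evaluations of the relevant Kloosterman sums modulo $2$, exploiting the extra symmetries coming from $\Gal(\F_{q^2}/\F_q)$ and the stabilizer of $\U_\theta$ in $\Aut(\Pi(f))$, or else an explicit construction of $q^3 - q + 1$ linearly independent codewords using the combinatorics of the pencil of ovals. My expectation is that the former path is the correct one and that the argument will ultimately hinge on proving a precise non-vanishing theorem for Kloosterman sums over $\F_{3^m}$ modulo $2$.
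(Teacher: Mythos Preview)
The statement you are attempting is labeled a \emph{conjecture} in the paper, and the paper does \emph{not} prove it; it explicitly records that a proof was announced by Wu in 2012 but has never appeared. What the paper actually establishes is the upper bound $\dim C_2(\U_\theta)\le q^3-q+1$ (Proposition~\ref{prop:upperbound}) together with an improved lower bound via characters and Kloosterman sums (Theorems~\ref{th:2rank_xiang} and~\ref{th:2rank-Kloostermansum}), which for $q=3^m$ reaches only about $\tfrac{2}{3}q^3$. So there is no ``paper's own proof'' to compare against; your proposal and the paper's machinery are aimed at the same open problem.

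Your strategy---pass to a group algebra over $\F_2$, decompose by characters, and reduce nonvanishing to $2$-adic congruences for Kloosterman sums---is precisely the paper's approach. One technical slip: the group acting regularly on the affine points of $\U_\theta$ is not the full shift group $T\cong\F_{q^2}^2$ of order $q^4$ but the subgroup $T_\theta\cong(\F_q^3,+)$, and it is in $\F_2[T_\theta]$ that the punctured code becomes an ideal.

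More importantly, your diagnosis of the obstacle is inverted. You write that ``the main obstacle is the matching upper bound,'' but the upper bound is the trivial direction: the $q$ characteristic vectors $v^{\Oval_{t\theta}}$ lie in $C_2(\U_\theta)^\perp$ and are visibly independent, giving $\dim\le q^3-q+1$ in one line. The genuine difficulty---and the gap the paper cannot close---is the \emph{lower} bound: one must show that for \emph{every} character $\chi_{u,v,w}$ with $(u,v)\neq(0,0)$ some block has nonzero image. The paper's Kloosterman reduction handles only the characters with $\Tr(uv\theta_1/w)\neq 0$ or with $uv=0$; for generic $u,v\neq 0$ the relevant sums (equations~\eqref{eq:lambda_1}--\eqref{eq:lambda_2}) are exponential sums over a conic that do not collapse to Kloosterman sums, and the paper offers no handle on them. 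Your closing expectation that a sharper Kloosterman nonvanishing theorem over $\F_{3^m}$ would finish the job is therefore too optimistic: even a complete determination of $K(a)\bmod 4$ would not touch the bulk of the remaining characters.
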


This conjecture can be found in \cite{barwick_unitals_2008,ebert_binary_codes_2001,xiang_recent_ams_2005}. In \cite{leung_dimensions_2009,leung_erratum_2011}, Leung and Xiang proved that $\dim C_2(\U_\theta) \ge (q^3-q^2+q) (1-\frac{1}{p}) + \frac{q^2}{p}$, where $p=\mathrm{char}(\F_q)$. The proof of this conjecture was claimed by Wu in a conference talk \cite{wu_binarycodes_2012} with few details. Nevertheless, the proof has not appeared in the public domain yet since 2012.

This paper is organized as follows: We first briefly introduce shift planes and the unitals $\U_\theta$ constructed in \cite{trombetti_unitals_2015}. Then we investigate the dimensions of the binary codes generated by the characteristic vectors of the blocks of the unitals $\U_\theta$. In particular, for $q=3^n$ and $f(x)=x^2$, we use Kloosterman sums to improve Leung and Xiang's result \cite{leung_dimensions_2009,leung_erratum_2011} on the aforementioned dimension.

\section{Shift planes and unitals}\label{sec:construction}

A projective plane is called a  \emph{shift plane} if there exists a flag $((\infty), L_\infty)$ and a commutative collineation group which fixes $((\infty), L_\infty)$ and acts regularly on the set of points not lying on $L_\infty$ as well as the set of lines not passing through $(\infty)$. A finite shift plane of order $q$ can be equivalently derived from abelian $(q,q,q,1)$-relative difference sets (RDS for short); see \cite{ganley_relative_1975}. 

When $q$ is odd, all known abelian $(q,q,q,1)$-RDSs are subsets of the group $(\F_q^2,+)$. Such a $(q,q,q,1)$-RDS is equivalent to a function $f:\F_q\rightarrow\F_q$, such that $x\mapsto f(x+a)-f(x)$ is always a bijection for each nonzero $a$. This type of functions are called \emph{planar functions} on $\F_q$, which were first investigated by Dembowski and Ostrom in \cite{dembowski_planes_1968}. As the counterpart, when $q=2^n$, abelian $(q,q,q,1)$-RDSs only exist in $C_{4}^n$ where $C_4$ is the cyclic group of order $4$. These RDSs can also be equivalently illustrated by functions over $\F_{2^n}$, which can be found in \cite{schmidt_planar_2014,zhou_2^n2^n2^n1-relative_2012}.

Let $\F$ be a finite field of an odd order and $f$ a planar function on $\F$. We define a projective plane $\Pi(f)$ as follows:
\begin{itemize}
	\item \textbf{Points:} $(x,y)\in \F\times\F$ and $(a)$ for $a\in \F\cup\{\infty\}$;
	\item \textbf{Lines:} $L_{a,b}:=\{(x,f(x+a)-b): x\in\F\}\cup \{(a)\}$ for all $(a,b)\in\F\times \F$, $N_a := \{(a,y): y\in\F \}\cup \{(\infty)\}$ and $L_\infty:= \{(a): a\in \F\cup \{\infty\} \}$.
\end{itemize}

The points except for those on $L_\infty$ are called the \emph{affine points} of $\Pi(f)$. By removing the line $L_\infty$ and the points on it, we get an affine plane.

The set of maps 
\[T:=\{\tau_{u,v}: \tau_{u,v}(x,y)=(x+u,y+v): u,v \in \F\}\]
induces an abelian collineation group on $\Pi(f)$, and this group acts regularly on the affine points and all lines $\{L_{a,b}: a,b\in \F\}$. Thus $\Pi(f)$ is a shift plane. We call this collineation group the \emph{shift group} of $\Pi(f)$. 
%It is also transitive on the line set $\{N_a : a \in \F\}$ and on the points of $L_\infty\setminus\{(\infty)\}$. Hence there are three orbits of all the points (lines) in $\Pi(f)$ under this group. 

When $f$ can be written as a Dembowski-Ostrom polynomial, i.e.\ $f(x)=\sum a_{ij}x^{p^i+p^j}$ where $p=\mathrm{char}(\F)$, the plane $\Pi(f)$ is also a commutative semifield plane. Using the corresponding semifield multiplication, we can label the points and lines of $\Pi(f)$ in a different way. The intersection of the translation group and the shift group of $\Pi(f)$ is $\{(x,y)\mapsto (x,y+b): b\in \F\}$. See \cite[Section 4]{ghinelli_finite_2003} for details. We refer to \cite{lavrauw_semifields_2011} and \cite{pott_semifields_2014} for recent surveys on semifields and planar functions respectively.

Up to equivalence, all known planar functions $f$ on finite fields $\F_q$ of odd characteristics can be written as a Dembowski-Ostrom polynomial except for the Coulter-Matthews ones which are power maps defined by $x\mapsto x^d$ on $\F_{3^m}$ for certain $d$; see \cite{coulter_planar_1997}. Both the Dembowski-Ostrom planar functions and the Coulter-Matthews ones satisfy that
\begin{itemize}
	\item $f(0)=0$ and 
	\item for arbitrary $a,b\in \F_q$, $f(a)=f(b)$ if and only if $a=\pm b$.
\end{itemize}
For a proof of the Dembowski-Ostrom polynomials case, we refer to \cite{kyureghyan_theorems_2008}; for the Coulter-Matthews functions $f(x)=x^d$ on $\F_{3^m}$, it can be verified directly from the fact $\gcd(d,3^m-1)=2$. Actually for a function $f$ defined by a Dembowski-Ostrom polynomial, the above conditions are necessary and sufficient for $f$ to be planar; see \cite{weng_further_2010}. If a planar function satisfies the aforementioned two conditions, then we call it a \emph{normal planar function}.

Let $\xi$ be an element in $\F_{q^2}\setminus \F_q$. Then every element $x$ of $\F_{q^2}$ can be written as $x=x_0+x_1\xi$ where $x_0,x_1\in\F_q$. Similarly, every function $f:\F_{q^2}\rightarrow \F_{q^2}$ can be written as  $f(x)=f_0(x)+f_1(x)\xi$ where $f_0,f_1$ are maps from $\F_q$ to itself. Throughout this paper, we frequently switch between the element $x\in \F_{q^2}$ and its two dimensional representation $(x_0,x_1)\in \F_{q}^2$. If a special assumption on $\xi$ is needed, we will point it out explicitly.

In \cite{trombetti_unitals_2015}, it is proved that the set of points
\begin{equation}\label{eq:u_theta_general}
	\U_\theta:=\{(x,t\theta): x\in\F_{q^2}, t\in \F_q \} \cup \{(\infty)\} 
\end{equation}
is a unital in $\Pi(f)$ under the assumption that 
\[\#\{x\in\F_{q^2}:\theta_1 f_0(x) - \theta_0 f_1(x)=c\} =
	\left\{
	\begin{array}{ll}
	 q+1, & \hbox{$c\neq 0$;} \\
	 1, & \hbox{$c=0$.}
	\end{array}
\right.
\]
By choosing appropriate elements $\theta$, it is shown that $\U_\theta$ are unitals for $8$ distinct families of planar functions $f$; see \cite{trombetti_unitals_2015}.

As a design, the point set of $\U_\theta$ is $\{(x,t\theta): x \in \F_{q^2}, t\in \F_q\}\cup \{(\infty)\}$ and all of its blocks are
\[B_{a}:= \{(a,t\theta): t\in \F_q\}\cup \{(\infty) \},\]
for each $a\in \F_{q^2}$ and
\[B_{a,b}:=\{(x,t\theta): f(x+a)-b=t\theta, t\in \F_q \}, \]
for each $a,b\in \F_{q^2}$ where $b_0\theta_1-b_1\theta_0\neq 0$. The equation $f(x+a)-b=t\theta$ is equivalent to the following two ones
\begin{eqnarray*}
	 f_0(x) - b_0 &=& t\theta_0,\\
	 f_1(x) - b_1 &=& t\theta_1.
\end{eqnarray*}
As $\theta\neq0$, the above system of equations is equivalent to
\begin{eqnarray*}
	\theta_1f_0(x)-\theta_0f_1(x)-(\theta_1b_0-\theta_0b_1) &=& 0,\\
				f_1(x) - b_1 &=& t\theta_1.
\end{eqnarray*} 
(If $\theta_1=0$, then we replace the second equation by $f_0(x)-b_0=t\theta_0$.)
Hence, the block $B_{a,b}$ can also be written as
\begin{equation}\label{eq:B_ab_another}
	B_{a,b}=\left\{\left(x,\frac{f_1(x+a)-b_1}{\theta_1}\cdot \theta\right): f_0(x+a) \theta_1-f_1(x+a)\theta_0=b_0\theta_1-b_1\theta_0 \right\},
\end{equation}
where $b_0\theta_1-b_1\theta_0\neq 0$ and $\theta_1\neq 0$. For $\theta_1=0$, we get
\begin{equation}\label{eq:B_ab_another_0}
	B_{a,b}=\left\{\left(x,\frac{f_0(x+a)-b_0}{\theta_0}\cdot \theta\right): f_0(x+a) \theta_1-f_1(x+a)\theta_0=b_0\theta_1-b_1\theta_0 \right\}.
\end{equation}

There are totally $q^4-q^3+q^2$ blocks and each of them contains $q+1$ points. For each pair of points, there is exactly one block containing them both. Hence $\U_\theta$ is a $2$-$(q^3+1,q+1,1)$-design.

It is not difficult to directly verify the following property of $\U_\theta$.

\begin{proposition}\label{prop:U_is_transitive}
	Let $\U_\theta$ be a unital defined by \eqref{eq:u_theta_general}. The subgroup $T_\theta := \{\tau_{a,b\theta}: a\in \F_{q^2}, b\in \F_q\}$ of the shift group $T$ of $\Pi(f)$ acts regularly on the affine points of $\U_\theta$.
\end{proposition}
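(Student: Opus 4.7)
The statement is a direct structural verification, so my plan is to unpack the three ingredients (subgroup, action, regularity) in order.

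First, I would check that $T_\theta$ is indeed a subgroup of $T$. Because $(a_1,b_1\theta)+(a_2,b_2\theta)=(a_1+a_2,(b_1+b_2)\theta)$ and $-(a,b\theta)=(-a,(-b)\theta)$ lie in $\F_{q^2}\times\theta\F_q$, the composition rule $\tau_{u_1,v_1}\circ\tau_{u_2,v_2}=\tau_{u_1+u_2,v_1+v_2}$ shows closure under products and inverses. So $T_\theta\le T$, and a count gives $|T_\theta|=q^2\cdot q=q^3$, which is exactly the number of affine points of $\U_\theta$ (since $\U_\theta$ has $q^3+1$ points, one of which is $(\infty)$).

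Next I would verify that $T_\theta$ stabilises the set of affine points of $\U_\theta$, namely $\{(x,t\theta):x\in\F_{q^2},\ t\in\F_q\}$. For $\tau_{a,b\theta}\in T_\theta$, applying it to $(x,t\theta)$ yields $(x+a,\,t\theta+b\theta)=(x+a,(t+b)\theta)$, which is again of the required form, with $x+a\in\F_{q^2}$ and $t+b\in\F_q$. Hence $T_\theta$ restricts to an action on the $q^3$ affine points of $\U_\theta$.

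For regularity, because $|T_\theta|$ equals the number of affine points, it suffices to exhibit transitivity. Given two affine points $(x_1,t_1\theta)$ and $(x_2,t_2\theta)$, the equation $\tau_{a,b\theta}(x_1,t_1\theta)=(x_2,t_2\theta)$ is equivalent to $a=x_2-x_1$ and $b=t_2-t_1$, and both quantities are uniquely determined in $\F_{q^2}$ and $\F_q$ respectively. This simultaneously proves transitivity and triviality of point stabilisers, so the action is regular. There is no real obstacle here; the only thing worth flagging in writing is to note explicitly that the two ``coordinate'' conditions $a\in\F_{q^2}$ and $b\in\F_q$ correspond precisely to the two factors that make up the affine point set, which is why $T_\theta$ is forced to have exactly the matching order $q^3$.
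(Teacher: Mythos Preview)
Your argument is correct and is precisely the direct verification the paper alludes to (the paper gives no proof, noting only that the property ``is not difficult to directly verify''). The one implicit point you might make explicit is that $\theta\neq 0$, so the parametrisation $(a,b)\mapsto\tau_{a,b\theta}$ is injective and hence $|T_\theta|=q^3$; otherwise there is nothing to add.
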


An \emph{oval} $\Oval$ in a projective plane $\Pi$ of odd order $q$ is a set of $q+1$ points such that every line in $\Pi$ meets $\Oval$ in $0$, $1$ or $2$ points. According to the famous result by Segre in \cite{segre_ovals_1955}, all ovals in desarguesian planes of odd orders are nondegenerate conics. The following property of $\U_\theta$ is proved in \cite{trombetti_unitals_2015}:
\begin{proposition}\label{prop:unital=ovals}
	Let $f$ be a normal planar function on $\F_{q^2}$ and $\U_\theta$ be a unital in $\Pi(f)$ defined by \eqref{eq:u_theta_general}. Then for each $c\in \F_{q^2}$, the set 
	\[\Oval_c:=\{(x, c): x \in \F_{q^2}\}\cup \{(\infty)\}\]
	is an oval in $\Pi(f)$ and $\U_\theta$ is a union of ovals, i.e.\
	\[\U_\theta = \bigcup_{t\in\F_q} \Oval_{t\theta}.\] 
\end{proposition}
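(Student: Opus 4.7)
The plan is to check the oval property of $\Oval_c$ directly by going through the three kinds of lines in $\Pi(f)$, then deduce the union decomposition from the definitions. Since $|\Oval_c|=q^2+1$ matches the required size for an oval in a plane of order $q^2$, the whole task reduces to bounding every line-intersection by two.

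First I would dispose of the two easy cases. The line at infinity $L_\infty$ consists solely of points $(a)$, so $L_\infty\cap \Oval_c=\{(\infty)\}$, a single point. For a vertical-type line $N_a=\{(a,y):y\in\F_{q^2}\}\cup\{(\infty)\}$, the intersection with $\Oval_c$ is $\{(a,c),(\infty)\}$, exactly two points. Both cases give at most two intersection points, as required.

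The main work is the affine lines $L_{a,b}=\{(x,f(x+a)-b):x\in\F_{q^2}\}\cup\{(a)\}$. Since $(a)\in L_\infty\setminus\Oval_c$, I only need to count solutions $x\in\F_{q^2}$ of $f(x+a)=b+c$. Substituting $y=x+a$, this becomes counting $y$ with $f(y)=b+c$. Here is where the \emph{normal} hypothesis on $f$ enters: since $f(0)=0$ and $f(u)=f(v)$ iff $u=\pm v$, the fibre $f^{-1}(b+c)$ has size $1$ when $b+c=0$ (only $y=0$), and either $0$ or $2$ otherwise (if $y_0$ is a solution then $y_0\neq 0$ so $\pm y_0$ are two distinct solutions, and these are all). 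Thus $|L_{a,b}\cap\Oval_c|\in\{0,1,2\}$, which completes the oval verification. The only subtle step is noting that for $b+c\neq 0$ the two preimages $\pm y_0$ are genuinely distinct, which needs $\mathrm{char}(\F_{q^2})$ odd; this is given.

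Finally, the decomposition is purely set-theoretic. From the definition \eqref{eq:u_theta_general},
\begin{equation*}
\U_\theta=\{(x,t\theta):x\in\F_{q^2},\ t\in\F_q\}\cup\{(\infty)\}=\bigcup_{t\in\F_q}\bigl(\{(x,t\theta):x\in\F_{q^2}\}\cup\{(\infty)\}\bigr)=\bigcup_{t\in\F_q}\Oval_{t\theta},
\end{equation*}
so no further argument is needed. The only potential obstacle is pinning down the fibre count for $L_{a,b}$, but the normality conditions listed just above reduce it to a one-line observation; after that the proposition is essentially a bookkeeping exercise.
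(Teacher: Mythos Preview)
Your argument is correct. The verification that $\Oval_c$ meets each line in at most two points is exactly the right case split, and the use of normality to control the fibres of $f$ is the key point; the union statement is indeed immediate from the definition.

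Note, however, that the present paper does not actually supply its own proof of this proposition: it simply quotes the result from \cite{trombetti_unitals_2015}. Your direct check is the natural argument and almost certainly coincides with what appears there, so there is nothing substantive to compare.
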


In particular, when $f(x)=x^2$ on $\F_{q^2}$, $\Pi(x^2)$ is a desarguesian plane and the following results hold:
\begin{lemma}\label{le:known_x^2}
	Let $\theta$ be in $\F_{q^2}$ such that $\theta^{q+1}$ is a nonsquare element in $\F_q$. 
	\begin{enumerate}
		\item The set of points $\U_\theta$ defined by \eqref{eq:u_theta_general} is a unital of order $q$ in the plane $\Pi(x^2)$ \cite[Theorem 2.4]{trombetti_unitals_2015}.
		\item All the unitals in $\{\U_\theta: \theta^{q+1}\text{ is a nonsquare in }\F_q\}$ are equivalent under the collineations of $\Pi(x^2)$ \cite[Proposition 3.3]{trombetti_unitals_2015}.
		\item The unital $\U_\theta$ is exactly the one constructed by Hirschfeld and Sz\"{o}nyi \cite{hirschfeld_sets_1991} and by Baker and Ebert \cite{baker_intersection_1990} independently \cite[Remark 1]{trombetti_unitals_2015}.
	\end{enumerate} 
\end{lemma}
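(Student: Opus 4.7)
To prove Lemma \ref{le:known_x^2}, I would verify each part by reducing to the general machinery set up earlier in this paper. For Part (1), fix $\xi \in \F_{q^2}$ with $\xi^2 = \alpha \in \F_q$ a nonsquare; then $\xi^q = -\xi$ and $\theta^{q+1} = \theta_0^2 - \alpha \theta_1^2$. Writing $x = x_0 + x_1 \xi$, one computes $f_0(x) = x_0^2 + \alpha x_1^2$ and $f_1(x) = 2 x_0 x_1$, so
\[
Q(x_0, x_1) := \theta_1 f_0(x) - \theta_0 f_1(x) = \theta_1 x_0^2 - 2 \theta_0 x_0 x_1 + \alpha \theta_1 x_1^2.
\]
The hypothesis that $\theta^{q+1}$ be a nonsquare forces $\theta_1 \neq 0$, and multiplying through by $\theta_1$ and completing the square yields $\theta_1 Q = (\theta_1 x_0 - \theta_0 x_1)^2 - \theta^{q+1} x_1^2$, an anisotropic binary quadratic form over $\F_q$ exactly when $\theta^{q+1}$ is a nonsquare. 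For such a form, $Q^{-1}(c)$ has $1$ point if $c = 0$ and $q+1$ points if $c \neq 0$, which is precisely the condition required for \eqref{eq:u_theta_general} to produce a unital.

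For Part (2), I would exhibit explicit collineations of $\Pi(x^2) = \mathrm{PG}(2, q^2)$. For every $a \in \F_{q^2}^\ast$, the affine map $(x, y) \mapsto (a x, a^2 y)$ extends to a projective collineation sending $L_{c, d}$ to $L_{a c,\, a^2 d}$, fixing $(\infty)$, and carrying $\U_\theta$ onto $\U_{a^2 \theta}$. Given another $\theta'$ with $(\theta')^{q+1}$ also a nonsquare, set $\beta := \theta' / \theta$; then $\beta^{q+1}$ is a ratio of two nonsquares in $\F_q^\ast$, hence a square $\gamma^2$ with $\gamma \in \F_q^\ast$. Consequently $\beta^{(q^2 - 1)/2} = \gamma^{q - 1} = 1$, so $\beta$ is a square in $\F_{q^2}^\ast$; picking $a \in \F_{q^2}^\ast$ with $a^2 = \beta$ yields the desired collineation sending $\U_\theta$ to $\U_{\theta'}$.

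For Part (3), I would directly match the combinatorial description. Proposition \ref{prop:unital=ovals} writes $\U_\theta$ as a union of $q$ ovals $\Oval_{t\theta}$ sharing the common point $(\infty)$, and by Segre's theorem these ovals are nondegenerate conics in $\mathrm{PG}(2, q^2)$; a direct inspection shows that $L_\infty$ is a common tangent at $(\infty)$ to all of them, which is exactly the Hirschfeld--Sz\"{o}nyi/Baker--Ebert description of this subfamily of Buekenhout--Metz unitals. The main obstacle here is notational rather than conceptual: the three sources use different coordinate systems, so a literal identification requires a concrete projective change of coordinates. However, by the orbit argument in Part (2) any two such unitals are already equivalent, so it suffices to exhibit one $\theta$ for which the defining equation in standard coordinates coincides with the Hirschfeld--Sz\"{o}nyi/Baker--Ebert normal form.
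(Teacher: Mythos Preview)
The paper does not give its own proof of this lemma: each of the three items is stated with a citation to \cite{trombetti_unitals_2015} (Theorem~2.4, Proposition~3.3, and Remark~1 respectively), and no argument is supplied in the present paper. So there is nothing in the paper's text to compare your proposal against; you have gone beyond what the authors do here by writing out a self-contained verification.

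That said, your arguments are correct and match in spirit what one finds in the cited source. For Part~(1) your computation of $f_0,f_1$ and the diagonalisation $\theta_1 Q = (\theta_1 x_0 - \theta_0 x_1)^2 - \theta^{q+1}x_1^2$ is exactly the right check, and the level-set counts follow from Lemma~\ref{le:quadratic_form_even} with $n=2$ (you may as well cite it rather than say ``for such a form''). For Part~(2) the dilation $(x,y)\mapsto(ax,a^2y)$ is indeed a collineation of $\Pi(x^2)$ carrying $\U_\theta$ to $\U_{a^2\theta}$, and your norm argument showing $\theta'/\theta$ is a square in $\F_{q^2}^*$ is clean. For Part~(3) your outline is adequate at the level of a remark: Proposition~\ref{prop:unital=ovals} plus Segre plus the observation that $L_\infty$ meets each $\Oval_{t\theta}$ only in $(\infty)$ identifies $\U_\theta$ as a pencil of conics with common tangent, which is the Baker--Ebert/Hirschfeld--Sz\H{o}nyi description; as you note, the residual work is a coordinate change that Part~(2) lets you perform for a single convenient $\theta$.
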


\section{Binary codes of $\U_\theta$ and their dimensions}\label{sec:codes}
By using MAGMA \cite{Magma} programs, we verified that for every planar functions on $\F_{q^2}$ constructed in \cite{trombetti_unitals_2015} with $q\le 9$, the $2$-ranks of all unitals $\U_\theta$ equal to $q^3-q+1$. Hence it seems that Conjecture \ref{conjecture:2rank} should also hold for other $\U_\theta$, i.e.\ $\dim C_2(\U_\theta)=q^3-q+1$ holds for all unitals $\U_\theta$ defined by \eqref{eq:u_theta_general}. 

The following upper bound on the dimension of $C_2(\U_\theta)$ was first proved for $f(x)=x^2$ (See \cite{leung_dimensions_2009,leung_erratum_2011} and \cite[Theorem 6.23]{barwick_unitals_2008}), and it can be generalized to all the unitals $\U_\theta$ constructed in \cite{trombetti_unitals_2015}. 
\begin{proposition}\label{prop:upperbound}
	Let $f$ be a normal planar function on $\F_{q^2}$ and $\U_\theta$ a unital defined by \eqref{eq:u_theta_general}.
	The characteristic vectors $v^{\Oval_{t\theta}}$ for all $t\in \F_q$, are linearly independent in $C_2(\U_\theta)^{\perp}$. Moreover, $\dim C_2(\U_\theta)\le q^3-q+1$.
\end{proposition}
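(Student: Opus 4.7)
The plan is to prove both parts of the proposition by a direct parity computation of $|B\cap\Oval_{t\theta}|$ for every block $B$ of $\U_\theta$, and then invoke duality. Concretely, for each $t\in\F_q$ I would show $|B\cap\Oval_{t\theta}|$ is even; this gives $v^{\Oval_{t\theta}}\in C_2(\U_\theta)^\perp$. The blocks are of two types, listed after \eqref{eq:u_theta_general}. For a ``vertical'' block $B_a$, the intersection with $\Oval_{t\theta}$ is $\{(a,t\theta),(\infty)\}$, which has size $2$. For a block $B_{a,b}$ with $b_0\theta_1-b_1\theta_0\neq 0$, since $\Oval_{t\theta}\subseteq\U_\theta$ we have $B_{a,b}\cap\Oval_{t\theta}=L_{a,b}\cap\Oval_{t\theta}$, and by Proposition \ref{prop:unital=ovals} this has cardinality $0$, $1$, or $2$. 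The only real step is to rule out the tangent case.

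For that, I would describe $L_{a,b}\cap\Oval_{t\theta}$ as the set of solutions $x\in\F_{q^2}$ of $f(x+a)=t\theta+b$. By the normal planar hypothesis on $f$ (namely $f(0)=0$ and $f(y_1)=f(y_2)\Leftrightarrow y_1=\pm y_2$), the equation $f(y)=c$ has exactly one solution if $c=0$ and either $0$ or $2$ solutions otherwise. So $L_{a,b}$ is tangent to $\Oval_{t\theta}$ precisely when $t\theta+b=0$, i.e., $b=-t\theta$; but this forces $b_0\theta_1-b_1\theta_0=0$, contradicting our standing assumption on $b$. Hence $|B_{a,b}\cap\Oval_{t\theta}|\in\{0,2\}$ and the parity claim follows, so every $v^{\Oval_{t\theta}}$ lies in $C_2(\U_\theta)^\perp$.

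For linear independence, I would use that distinct ovals $\Oval_{t\theta}$ and $\Oval_{s\theta}$ meet only at $(\infty)$, so each affine point of $\U_\theta$ is in a unique oval. Reading a relation $\sum_{t\in\F_q}\lambda_t v^{\Oval_{t\theta}}=0$ over $\F_2$ at the coordinate indexed by an affine point $(x,s\theta)$ instantly gives $\lambda_s=0$, whence all $\lambda_t=0$. Together with the previous paragraph, this exhibits $q$ linearly independent vectors in $C_2(\U_\theta)^\perp$, and the standard identity $\dim C+\dim C^\perp = q^3+1$ over $\F_2$ yields $\dim C_2(\U_\theta)\le q^3-q+1$. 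The only mildly delicate step is the tangency analysis: the normal planar hypothesis is exactly the ingredient that makes the exceptional case $b=-t\theta$ coincide with the set of parameters excluded from defining a block.
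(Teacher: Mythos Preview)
Your proof is correct and follows essentially the same approach as the paper's: show each block meets each $\Oval_{t\theta}$ in an even number of points (hence $v^{\Oval_{t\theta}}\in C_2(\U_\theta)^\perp$), observe that the ovals share only $(\infty)$ (hence linear independence), and conclude via $\dim C+\dim C^\perp=q^3+1$. You supply more detail than the paper does---in particular the explicit tangency analysis via normality of $f$, which the paper leaves implicit when asserting that $B_{a,b}$ meets $\Oval_{t\theta}$ in $0$ or $2$ points---but the structure is identical.
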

\begin{proof}
	A vector $w$ lies in the dual of the binary code $C_2(\U_\theta)$ if and only if each block of $\U_\theta$ meets the support of $w$ in an even number of points. Here the support of $w$ is the set of points which correspond to the positions at which $w$ is nonzero. For every block $B_a$ of $\U_\theta$ containing $(\infty)$, it meets $\Oval_{t\theta}$ in exactly two points. For every block $B_{a,b}$ of $\U_\theta$, it meets $\Oval_{t\theta}$ in $0$ or $2$ points. Since $\U_\theta = \bigcup_{t\in\F_q} \Oval_{t\theta}$, all $v^{\Oval_{t\theta}}\in C_2(\U_\theta)^{\perp}$. As all these ovals have only the point $(\infty)$ in common, $\{v^{\Oval_{t\theta}}:t\in \F_q\}$ are linearly independent. It implies that $\dim (C_2(\U_\theta)^{\perp})\ge q$. Therefore $\dim C_2(\U_\theta)\le q^3-q+1$.
\end{proof}

Instead of considering $\dim C_2(\U_\theta)$, we remove the point $(\infty)$ from the point set of $\U_\theta$ and each block $B_a$. The new incidence structure is denoted by $\U'_\theta$. In other words,  $C_2(\U'_\theta)$ is the code $C_2(\U_\theta)$ punctured the coordinate corresponding to $(\infty)$. From the proof of Proposition \ref{prop:upperbound}, we know that all $v^{\Oval_{t\theta}}\in C_2(\U_\theta)^{\perp}$, which implies that $v^{\{(\infty)\}}\notin C_2(\U_\theta)$. Hence 
\begin{equation}\label{eq:dim_U=dim_U'}
\dim C_2(\U_\theta)=\dim C_2(\U'_\theta).
\end{equation}

Now we proceed to use the group characters approach applied in \cite{leung_dimensions_2009} to calculate $\dim C_2(\U'_\theta)$.
By Proposition \ref{prop:U_is_transitive}, the group $T_\theta$ acts transitively on all points of $\U_\theta$ except for $(\infty)$. Hence we can identify each coordinate of $C_2(\U'_\theta)$ with the elements in $T_\theta$. That means the point $(x,t\theta)\in \U_\theta$ corresponds to $(x,t\theta)\in T_\theta$. Under this identification, the code $C_2(\U'_\theta)$ becomes an ideal of the group ring $\F_2[T_\theta]$ and we can use the characters on $G$ to calculate $\dim C_2(\U'_\theta)$.

First we have to extend $\F_2$. Let $\K$ be a finite extension of $\F_2$ such that a primitive $p$-th root of unity $\varepsilon_p$ is in $\K$. It is well known that
\[
	\dim C_2(\U'_\theta)=\dim_{\K} C_2(\U'_\theta)=\#\{\chi \in \hat{T}_\theta: M e_\chi\neq 0  \},
\]
where $M$ is the submodule generated by the blocks of the incidence structure $\U'_\theta$,  $\hat{T}_\theta$ is the character group of $T_\theta$ and 
$e_\chi = \frac{1}{|T_\theta|} \sum_{g\in T_\theta} \chi(g^{-1})g$; see \cite[Page 277]{lander_symmetric_1983}.
Here $M e_\chi\neq 0$ means that $M e_\chi$ is not the trivial submodule $\{0\}$, which implies  there exists at least one element $B$ in $M$ such that $B e_\chi\neq 0$.
In other words,
\begin{equation}\label{eq:dim=cK}
	\dim C_2(\U'_\theta)= \#\cK(\U_\theta),
\end{equation}
where $\cK(\U_\theta)$ is defined to be the set of $\chi \in \hat{T}_\theta$ such that there exists at least one block $B$ of $\U'_\theta$ satisfying $\chi(B)\neq 0$. 

As $T_\theta\cong (\F_q^3,+)$, each character $\chi \in \hat{T}_\theta$ can be written as
\[\chi_{u,v,w}: (x,t\theta)\mapsto \varepsilon_p^{\Tr_{q/p}(ux_0+vx_1 + wt)},\]
where $u,v,w,t\in \F_q$ and $x=(x_0,x_1)\in \F_q^2$. For $t\in \F_q$, we also define
\[\chi(t)= \varepsilon_p^{\Tr_{q/p}(t)}.\]
That means $\chi_{u,v,w}(x,t\theta)=\chi(ux_0+vx_1+w\theta)$. The well known \emph{orthogonal relation} is 
\begin{equation}\label{eq:orthogonal}
	\sum_{x\in \F_q} \chi(ax)=
	\left\{
	  \begin{array}{ll}
	    1, & \hbox{$a= 0$;} \\
	    0, & \hbox{$a\neq 0$.}
	  \end{array}
	\right.
\end{equation}

\begin{lemma}\label{le:2rank-1}
	Let $f$ be a normal planar function on $\F_{q^2}$. Let $\U_\theta$ be a unital defined by \eqref{eq:u_theta_general}. Then $\chi_{u,v,0}\in\cK(\U_\theta)$ and $\chi_{0,0,w}\notin \cK(\U_\theta)$ for each $u,v\in \F_q$ and $w\in \F_q^*$. 
\end{lemma}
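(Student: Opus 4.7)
The plan is to evaluate each character directly on the two types of blocks of $\U'_\theta$, namely the punctured blocks $B_a \setminus \{(\infty)\}$ and the blocks $B_{a,b}$, and use the orthogonality relation \eqref{eq:orthogonal} together with the oval decomposition of $\U_\theta$ coming from Proposition \ref{prop:unital=ovals}. Throughout, one should keep in mind that $q$ is odd, so $q \neq 0$ in $\K$, and that $\K$ has characteristic $2$, so doubled contributions vanish.

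For the first claim, I would pick the simplest possible block, namely a punctured block $B_a\setminus\{(\infty)\}=\{(a,t\theta):t\in\F_q\}$, and compute
\[
\chi_{u,v,0}(B_a\setminus\{(\infty)\})=\sum_{t\in\F_q}\chi(u a_0+v a_1)=q\,\chi(u a_0+v a_1).
\]
Since $q$ is odd it is nonzero in $\K$, and $\chi(u a_0+v a_1)$ is a root of unity, hence nonzero. Thus for every $u,v\in\F_q$ there is a block on which $\chi_{u,v,0}$ does not vanish, giving $\chi_{u,v,0}\in\cK(\U_\theta)$.

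For the second claim, I would show that $\chi_{0,0,w}(B)=0$ for every block $B$ of $\U'_\theta$ when $w\in\F_q^*$. On a punctured block $B_a\setminus\{(\infty)\}$ the evaluation is
\[
\chi_{0,0,w}(B_a\setminus\{(\infty)\})=\sum_{t\in\F_q}\chi(wt)=0
\]
by \eqref{eq:orthogonal}. For a block $B_{a,b}$ the argument uses the oval decomposition: by Proposition \ref{prop:unital=ovals} we have $\U_\theta=\bigcup_{t\in\F_q}\Oval_{t\theta}$, and as recalled in the proof of Proposition \ref{prop:upperbound} each oval $\Oval_{t\theta}$ meets $B_{a,b}$ in $0$ or $2$ points. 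Since the character $\chi_{0,0,w}$ depends only on the second coordinate $t\theta$, both points of $B_{a,b}\cap\Oval_{t\theta}$ (when this intersection is non-empty) contribute the same value $\chi(wt)$, so their combined contribution is $2\chi(wt)=0$ in $\K$. Summing over all $t$ for which the intersection is non-empty yields $\chi_{0,0,w}(B_{a,b})=0$.

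There is no real obstacle: the first part is an immediate orthogonality computation, and the second part reduces cleanly to the structural fact, already proved in Propositions \ref{prop:unital=ovals} and \ref{prop:upperbound}, that $\U_\theta$ is a union of ovals each of which meets any block $B_{a,b}$ in an even number of points. The only thing to be careful about is to work on the punctured incidence structure $\U'_\theta$ so that the block $B_a$ contributes $q$ (rather than $q+1$) summands, which is what makes the first computation nonzero in characteristic $2$.
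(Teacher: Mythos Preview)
Your proof is correct. The first claim and the vanishing on the punctured blocks $B_a\setminus\{(\infty)\}$ are handled exactly as in the paper.

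For the vanishing of $\chi_{0,0,w}$ on the blocks $B_{a,b}$ you take a slightly different route. The paper writes out
\[
\chi_{0,0,w}(B_{a,b})=\chi(-w'b_1)\sum_{x\in C_{0,\beta(b)}}\chi\bigl(w'f_1(x)\bigr),
\]
introduces the sets $C_{0,\beta}$ explicitly, and then uses normality of $f$ directly to pair each $c\in C_{0,\beta(b)}$ with $-c$ (noting $f_1(c)=f_1(-c)$), so that the sum vanishes in characteristic $2$. You instead invoke the oval decomposition of Proposition~\ref{prop:unital=ovals} together with the evenness of $|B_{a,b}\cap\Oval_{t\theta}|$ recorded in the proof of Proposition~\ref{prop:upperbound}, and observe that $\chi_{0,0,w}$ is constant on each slice $\Oval_{t\theta}$. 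These are two packagings of the same underlying symmetry $x\leftrightarrow -x$ coming from normality: your version is cleaner and avoids any computation, while the paper's version has the advantage of setting up the notation $C_{0,\beta}$ and the expression for $\chi_{u,v,w}(B_{a,b})$ that are reused throughout the proofs of Theorem~\ref{th:2rank_xiang} and Theorem~\ref{th:2rank-Kloostermansum}.
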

\begin{proof}
	Recall that we write $x=x_0+x_1\xi$ for each $x\in \F_{q^2}$.
	Without loss of generality, we assume that $\theta_1\neq0$ (otherwise $\theta_0\neq 0$, we replace $\theta_1$ and $f_1$ by $\theta_0$ and $f_0$ respectively in the rest of this proof). By \eqref{eq:B_ab_another}, we have that
	\[B_{a,b}=\left\{\left(x,\frac{f_1(x+a)-b_1}{\theta_1}\cdot \theta\right): f_0(x+a) \theta_1-f_1(x+a)\theta_0=b_0\theta_1-b_1\theta_0 \right\},\]
	where $b_0\theta_1-b_1\theta_0\neq 0$.  Let $w'= w/\theta_1$.
	We denote 
	\begin{equation}\label{eq:definition_circles}
		C_{a,\beta(b)}:=\{x : f_0(x+a)\theta_1-f_1(x+a)\theta_0=\beta(b)\},
	\end{equation}
	where $\beta(b)=b_0\theta_1-b_1\theta_0\neq 0$. It follows that %\zhou{Compared with previous version, here $w'$ is instead of $w$.}
	\begin{align*}
		\chi_{u,v,w}(B_{a,b}) &=\sum_{x\in C_{a,\beta(b)}}\chi\left(ux_0+vx_1+w\cdot \frac{f_1(x+a)-b_1}{\theta_1}\right)\\
						      &=\chi(-w'b_1)\sum_{x\in C_{0,\beta(b)}}\chi(u(x_0-a_0)+v(x_1-a_1)+w'f_1(x))\\
						      &=\chi(-ua_0-va_1-w'b_1)\sum_{x\in C_{0,\beta(b)}}\chi(ux_0+vx_1+w'f_1(x)),\\
		\chi_{u,v,w}(B_{a}) &=\sum_{t\in \F_{q}}\chi(ua_0+va_1+wt).				      
	\end{align*}
	
	When $u=v=0$ and $w\neq0$, by \eqref{eq:orthogonal}, we have 
	\[\chi_{0,0,w}(B_{a})=\sum_{t\in \F_{q}}\chi(wt)=0.\] 
	For each $c\in C_{0,\beta(b)}$, from the normality of $f$ it follows that $c\neq 0 $, $-c\in C_{0,\beta(b)}$ and $f_1(c)=f_1(-c)$. Hence 
	\[\chi_{0,0,w}(B_{a,b})=\chi(-w'b_1)\sum_{x\in C_{0,\beta(b)}}\chi(w'f_1(x))=0.\]
	That means $\chi_{0,0,w}\notin \cK(\U_\theta)$.
	
	When $w=0$, we have
	\[\chi_{u,v,0}(B_{a})=\sum_{t\in \F_{q}}\chi(ua_0+va_1)=q\chi(ua_0+va_1)=\chi(ua_0+va_1).\]
	Therefore $\chi_{u,v,0}\in \cK(\U_\theta)$.
\end{proof}

In the rest of this paper, we restrict ourselves to several special cases of $f$. The following theorem applies to the planar functions $f(x)=x^2$,  the one derived from Dickson's semifields and the semifields constructed in \cite{zhou_new_2013}. In fact, for $f(x)=x^2$, this result is proved by Leung and Xiang in \cite{leung_dimensions_2009,leung_erratum_2011} in a slightly different way.
\begin{theorem}\label{th:2rank_xiang}
	Let $f$ be a normal planar function on $\F_{q^2}$. Let $\U_\theta$ be a unital defined by \eqref{eq:u_theta_general}. Assume that $\theta_1\neq 0$ and $f_1(x)= 2x_0x_1$. If $w\neq 0$ and $\Tr_{q/p}\left(\frac{uv}{w}\theta_1\right)\neq 0$, then $\chi_{u,v,w}\in\cK(\U_\theta)$. Furthermore $\dim C_2(\U_\theta)\ge (q^3-q^2+q)\left(1-\frac{1}{p}\right) + \frac{q^2}{p}$.
\end{theorem}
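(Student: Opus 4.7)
The plan is to reduce $\chi_{u,v,w}(B_{a,b})\ne 0$ to the non-vanishing of an explicit Weil-type sum over $\F_{q^2}$, evaluate that sum using the bilinearity $f_1(x)=2x_0x_1$, and then count the admissible characters and invoke \eqref{eq:dim=cK}. From the proof of Lemma~\ref{le:2rank-1},
$$\chi_{u,v,w}(B_{a,b}) = \chi(-ua_0-va_1-w'b_1)\cdot S(\beta(b)),\qquad w':=w/\theta_1,$$
where $\beta(b)=b_0\theta_1-b_1\theta_0\ne 0$ and $S(c):=\sum_{x\in C_{0,c}}\chi(ux_0+vx_1+w'f_1(x))$. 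Since the prefactor is a root of unity, it suffices to exhibit one $c\in \F_q^*$ with $S(c)\ne 0$.

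To find such a $c$, I would sum $S(c)$ over $c\in \F_q^*$. Normality of $f$ forces $C_{0,0}=\{0\}$ and hence $S(0)=1$, so
$$1+\sum_{c\in \F_q^*}S(c) \;=\; \sum_{x\in \F_{q^2}}\chi(ux_0+vx_1+2w'x_0x_1) \;=:\; T.$$
Summing over $x_1$ first, orthogonality \eqref{eq:orthogonal} collapses the sum to the unique $x_0=-v/(2w')$, giving $T=q\cdot\varepsilon_p^{-\alpha/2}$ with $\alpha:=\Tr_{q/p}(uv\theta_1/w)\in \F_p$. The crucial observation is that the coefficient field $\K$ has characteristic $2$ while $q$ is odd, so $q=1$ in $\K$; hence $\sum_{c\in \F_q^*}S(c)=\varepsilon_p^{-\alpha/2}-1$. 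The hypothesis $\alpha\ne 0$ together with $p$ odd yields $\varepsilon_p^{-\alpha/2}\ne 1$ in $\K$, so the sum is nonzero and some $S(c)$ is nonzero. This establishes $\chi_{u,v,w}\in\cK(\U_\theta)$.

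For the dimension bound, I combine this with Lemma~\ref{le:2rank-1}: $\cK(\U_\theta)$ contains all $q^2$ characters $\chi_{u,v,0}$, plus every $\chi_{u,v,w}$ with $w\in \F_q^*$ and $\Tr_{q/p}(uv\theta_1/w)\ne 0$. For each fixed $w\in \F_q^*$ and $u\in \F_q^*$, the map $v\mapsto \Tr_{q/p}(uv\theta_1/w)$ is a nonzero $\F_p$-linear form on $\F_q$ whose kernel has size $q/p$, leaving $q-q/p$ admissible values of $v$. This yields $q(q-1)^2(1-1/p)$ extra characters, and a short rearrangement gives
$$q^2+q(q-1)^2\bigl(1-\tfrac{1}{p}\bigr) \;=\; (q^3-q^2+q)\bigl(1-\tfrac{1}{p}\bigr)+\tfrac{q^2}{p},$$
so the bound follows via \eqref{eq:dim=cK} and \eqref{eq:dim_U=dim_U'}. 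The delicate point is the second step: the sum $T$ trivialises so cleanly only because $f_1$ is bilinear. For the more general planar $f$ treated in the rest of the paper, the analogous sum is a genuine Kloosterman sum, and extracting its non-vanishing will be where the real work lies.
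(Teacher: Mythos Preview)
Your proof is correct and follows essentially the same route as the paper's: reduce to showing some $S(\beta)\ne 0$, sum $S(\beta)$ over $\beta\in\F_q^*$, collapse the double sum via orthogonality to a single character value, and read off the trace condition. The only cosmetic differences are that the paper first substitutes $w\mapsto w/2$ (so that $f_1(x)/2=x_0x_1$ appears without the factor $2$), and that the paper organizes the final count by subtracting the zero-trace triples from $(q-1)(q^2-1)$ rather than counting the nonzero-trace triples directly; both computations yield $q^2+q(q-1)^2(1-1/p)$.
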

\begin{proof}
	From the proof of Lemma \ref{le:2rank-1}, we have $w'=w/\theta$ and
	\[\chi_{u,v,w/2}(B_{a,b})=\chi(-ua_0-va_1-w'b_1/2)\sum_{x\in C_{0,\beta(b)}}\chi(ux_0+vx_1+w'f_1(x)/2).\]
	Hence we may only concentrate on the blocks $B_{0,b}$ and we denote $\sum_{x\in C_{0,\beta(b)}}\chi(ux_0+vx_1+w'f_1(x)/2)$ by $S(\beta)$. We then have
	\begin{align*}
		\sum_{\beta\in \F_q^*}S(\beta) &= \sum_{\beta\in \F_q^*} \sum_{x\in C_{0,\beta}}\chi(ux_0+vx_1+w'x_0x_1)\\
									   &= \sum_{x\in \F_{q^2}^*}\chi(ux_0+vx_1+w'x_0x_1)\\
									   &= \sum_{x_0\in \F_q}\chi(ux_0)\sum_{x_1\in \F_q}\chi((v+w'x_0)x_1) - \chi(0)\\
									   &= \chi\left(\frac{uv}{w'}\right)-\chi(0) \qquad \qquad \qquad \text{(by \eqref{eq:orthogonal})}\\
									   &= \chi\left(\frac{uv}{w'}\right)+1,
	\end{align*}
	which equals $0$ if and only if $\Tr_{q/p}\left(\frac{uv}{w'}\right)=0$. If $\sum_{\beta\in \F_q^*}S(\beta)\neq0$, then there is at least one nonzero term which means $\chi_{u,v,w}\in\cK(\U_\theta)$.
	
	To get the lower bound for $\cK(\U_\theta)$, we need to count the cardinality of the set $\{(u,v,w): w\neq 0, \Tr_{q/p}\left(\frac{uv}{w}\theta_1\right)=0\}$, which equals $(q-1)^2(1+\frac{q}{p})$. Together with \eqref{eq:dim_U=dim_U'}, \eqref{eq:dim=cK} and Lemma \ref{le:2rank-1}, we have
	\begin{align*}
		\dim C_2(\U_\theta) & = \#\cK(\U_\theta)\\
		& \ge q^2 + \left((q-1)(q^2-1) - (q-1)^2\left(1+\frac{q}{p}\right)\right)\\
		&=(q^3-q^2+q)\left(1-\frac{1}{p}\right) + \frac{q^2}{p}. \qedhere
	\end{align*}	
\end{proof}

\section{A new lower bound on the dimension of $C_2(\U_\theta)$}
In this section, we proceed to improve the lower bound on $\cK(\U_\theta)$ for $f(x)=x^2$. In the proof of our theorem, we need the following lemmas.
\begin{lemma}\cite[Theorem 6.26]{lidl_finite_1997}\label{le:quadratic_form_even}
	Let $f$ be a nondegenerate quadratic form over $\F_q$, $q$ odd, in an even number $n$ of indeterminates. Then for $b\in \F_q$ the number of solutions of the equation $f(x_1,\dots, x_n)=b$ in $\F_{q^n}$ is
	\[q^{n-1} + v(b)q^{(n-2)/2}\eta((-1)^{(n/2}\Delta),\]
	where $\eta$ is the quadratic character of $\F_q$,  $\Delta=\det(f)$ and the integer-valued function $v$ is defined by $v(b)=-1$ for $b\in \F_{q}^*$ and $v(0)=q-1$.
\end{lemma}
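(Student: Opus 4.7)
The plan is to reduce the count to the evaluation of a quadratic Gauss sum. Since $q$ is odd and $f$ is nondegenerate, by the classical diagonalization of quadratic forms in characteristic not $2$ there is an invertible linear change of variables $y = Lx$ converting $f$ to $a_1y_1^2 + \cdots + a_ny_n^2$, and $\eta(a_1\cdots a_n) = \eta(\Delta)$ because $a_1\cdots a_n$ differs from $\Delta$ by the square factor $\det(L)^2$. Since $x\mapsto Lx$ is a bijection of $\F_q^n$, the number $N(b)$ of solutions of $f(x)=b$ coincides with the number of solutions of the diagonal equation.

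Next I would expand via Fourier inversion on $\F_q$: letting $\psi(y) := \varepsilon_p^{\Tr_{q/p}(y)}$ denote the canonical additive character, the orthogonality relation \eqref{eq:orthogonal} gives
\[ N(b) = \frac{1}{q}\sum_{t\in\F_q}\psi(-tb)\prod_{i=1}^n\Bigl(\sum_{y_i\in\F_q}\psi(ta_iy_i^2)\Bigr). \]
The $t=0$ summand contributes $q^{n-1}$. For $t\neq 0$ each inner factor is a one-dimensional quadratic Gauss sum equal to $\eta(ta_i)\,G(\eta,\psi)$, where $G(\eta,\psi)$ denotes the standard Gauss sum satisfying $G(\eta,\psi)^2=\eta(-1)q$. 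Taking the product over $i$ yields $\eta(t)^n\,\eta(a_1\cdots a_n)\,G(\eta,\psi)^n$; here it is crucial that $n$ is even, so $\eta(t)^n = 1$ for every $t\neq 0$, and $G(\eta,\psi)^n = (\eta(-1)q)^{n/2} = \eta((-1)^{n/2})\,q^{n/2}$.

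Collecting these contributions gives
\[ N(b) = q^{n-1} + q^{(n-2)/2}\,\eta\bigl((-1)^{n/2}\Delta\bigr)\sum_{t\in\F_q^*}\psi(-tb), \]
and the remaining character sum is $q-1$ when $b=0$ and $-1$ when $b\neq 0$, which is precisely $v(b)$. The only real technical hazard is the bookkeeping of the Gauss sum: one must correctly track the sign $\eta(-1)$, use the parity of $n$ to eliminate the $t$-dependence in the character factor, and observe that the diagonalizing change of variables does not affect $\eta(\Delta)$. Once these three items are verified, the rest is a one-line assembly.
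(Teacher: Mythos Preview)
The paper does not supply its own proof of this lemma: it is quoted verbatim as \cite[Theorem 6.26]{lidl_finite_1997} and used as a black box. Your argument is correct and is in fact the standard proof one finds in Lidl--Niederreiter: diagonalize, apply additive Fourier inversion, factor the resulting sum as a product of one-variable quadratic Gauss sums, and use $G(\eta,\psi)^2=\eta(-1)q$ together with the parity of $n$ to collapse the $t$-dependence.

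One small notational caution: you write $\psi(y)=\varepsilon_p^{\Tr_{q/p}(y)}$, but in this paper $\varepsilon_p$ is specifically a primitive $p$-th root of unity in the characteristic-$2$ field $\K$, not in $\mathbb{C}$. Since you are counting solutions (an integer), the Fourier inversion and the Gauss-sum identity $G(\eta,\psi)^2=\eta(-1)q$ must be carried out with the \emph{complex} additive character $\lambda(y)=\zeta_p^{\Tr_{q/p}(y)}$ (the paper's own $\lambda$), not with $\chi$ or $\varepsilon_p$. The logic is unaffected, but to splice cleanly into the paper you should replace $\varepsilon_p$ by $\zeta_p$ (equivalently, use $\lambda$ rather than $\psi$) so that the identity $G^2=\eta(-1)q$ is literally true rather than reduced modulo~$2$.
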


\begin{lemma}\cite[Theorem 6.27]{lidl_finite_1997}\label{le:quadratic_form_odd}
	Let $f$ be a nondegenerate quadratic form over $\F_q$, $q$ odd, in an odd number $n$ of indeterminates. Then for $b\in \F_q$ the number of solutions of the equation $f(x_1,\dots, x_n)=b$ in $\F_{q^n}$ is
	\[q^{n-1} + q^{(n-1)/2}\eta((-1)^{(n-1)/2}b\Delta),\]
	where $\eta$ is the quadratic character of $\F_q$ and $\Delta=\det(f)$.
\end{lemma}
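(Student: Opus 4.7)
The plan is to count solutions via additive characters of $\F_q$ after reducing to a diagonal normal form for $f$. Over $\F_q$ with $q$ odd, any nondegenerate quadratic form in $n$ variables is equivalent, through a linear substitution, to $a_1 x_1^2 + \cdots + a_n x_n^2$ with each $a_i\in\F_q^*$ and $a_1 \cdots a_n \equiv \Delta \pmod{(\F_q^*)^2}$. Such a substitution preserves the number of solutions, and the target expression depends on $\Delta$ only through $\eta(\Delta)$, so this reduction is harmless.

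Next, I would fix a nontrivial additive character $\psi$ of $\F_q$ and apply the orthogonality relation to write
\[
N(b):=\#\{x\in\F_q^n : f(x)=b\} = \frac{1}{q}\sum_{t\in\F_q}\psi(-tb)\sum_{x\in\F_q^n}\psi(tf(x)).
\]
The $t=0$ contribution produces the main term $q^{n-1}$. For $t\neq 0$ the inner sum factorises into one-dimensional quadratic sums, for which I would invoke the standard evaluation $\sum_{x\in\F_q}\psi(cx^2)=\eta(c)G$ (valid for $c\neq 0$), where $G=G(\eta,\psi)$ is the quadratic Gauss sum, satisfying $G^2=\eta(-1)q$. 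Since $n$ is odd, $\eta(t^n)=\eta(t)$, so the error term collapses to
\[
\frac{G^n\eta(\Delta)}{q}\sum_{t\in\F_q^*}\eta(t)\psi(-tb).
\]

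To evaluate this residual sum, for $b\neq 0$ I would substitute $u=-tb$ and compute it to be $\eta(-b)G$; the convention $\eta(0)=0$ handles the $b=0$ case correctly, since orthogonality of $\eta$ independently forces the sum to vanish there. Substituting back, the error contribution becomes $q^{-1}G^{n+1}\eta(-b\Delta)$, and using $G^{n+1}=(G^2)^{(n+1)/2}=(\eta(-1)q)^{(n+1)/2}$ together with the sign identity $(-1)^{(n+1)/2}\cdot(-1)=(-1)^{(n-1)/2}$ simplifies this to the claimed $q^{(n-1)/2}\eta((-1)^{(n-1)/2}b\Delta)$.

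The main obstacle is the careful bookkeeping of quadratic-character values and sign factors while manipulating the Gauss sum; the decisive technical point is that $n$ odd makes $n+1$ even, so $G^{n+1}$ can be rewritten as a genuine element of $\F_q$ via $G^2=\eta(-1)q$. Everything else is elementary Gauss-sum theory, and no deeper tool is needed.
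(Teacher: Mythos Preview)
Your argument is correct and is essentially the standard Gauss-sum proof found in Lidl--Niederreiter. The paper itself does not prove this lemma at all; it is simply quoted as \cite[Theorem~6.27]{lidl_finite_1997} and used as a black box, so there is no ``paper's own proof'' to compare against. Your diagonalisation, the orthogonality expansion, the factorisation into one-variable Gauss sums, and the bookkeeping $G^{n+1}=(\eta(-1)q)^{(n+1)/2}$ are all fine; the one cosmetic slip is the remark that $G^{n+1}$ becomes ``a genuine element of $\F_q$''---it is an integer, not a field element, but this does not affect the argument.
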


\begin{lemma}\label{le:chi(ac^2)}
	Let $a$ be a nonzero element in $\F_q$. Then
	\[\sum_{c\in \F_q}\chi(ac^2)=1.\]
\end{lemma}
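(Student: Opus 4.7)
The plan is to exploit the two facts that (i) the coefficient field $\K$ has characteristic $2$, and (ii) the map $c\mapsto c^2$ on $\F_q$ has the involution $c\mapsto -c$ among its fibers (which is a genuine involution because $q$ is odd, so $c\neq -c$ whenever $c\neq 0$). Together these collapse almost the entire sum to zero, leaving only the contribution from $c=0$.

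More concretely, I would first isolate the term $c=0$, which contributes $\chi(0)=\varepsilon_p^{\Tr_{q/p}(0)}=\varepsilon_p^0=1$. Then I would split the remaining sum $\sum_{c\in\F_q^*}\chi(ac^2)$ into pairs $\{c,-c\}$; since $c^2=(-c)^2$, each such pair contributes $2\chi(ac^2)$, which vanishes in $\K$ because $\mathrm{char}(\K)=2$. Summing over all pairs gives $0$, so the total is $0+1=1$, as required.

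The hypothesis $a\neq 0$ is not strictly needed for this pairing argument (the value $\chi(ac^2)$ is well defined for any $a$), but it is harmless and matches the way the lemma will be invoked. The only step that would need any care in a formal write-up is noting that the pairing $c\leftrightarrow -c$ partitions $\F_q^*$ into $(q-1)/2$ pairs, which is exactly where the oddness of $q$ enters; given that, the argument is immediate and there is no substantive obstacle.
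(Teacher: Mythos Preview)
Your argument is correct and is genuinely simpler than the route taken in the paper. The paper proves the lemma by observing that $x\mapsto \Tr_{q/p}(ax^2)$ is a nondegenerate quadratic form over $\F_p$ (this is where the hypothesis $a\neq 0$ is actually used), and then invokes the solution-counting Lemmas~\ref{le:quadratic_form_even} and~\ref{le:quadratic_form_odd} to deduce that the number of $x$ with $\Tr_{q/p}(ax^2)=b$ is even for $b\neq 0$ and odd for $b=0$; in characteristic~$2$ this collapses the sum to $\varepsilon_p^0=1$. Your pairing $c\leftrightarrow -c$ on $\F_q^*$ reaches the same parity conclusion directly, without any appeal to quadratic-form theory, and as you note it even works when $a=0$. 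The paper's approach has the mild advantage of making explicit the quadratic-form structure that underlies later computations, but for the lemma itself your argument is cleaner and entirely self-contained.
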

\begin{proof}
	As $(x,y) \mapsto \Tr_{q/p}(a(x+y)^2)-\Tr_{q/p}(ax^2)-\Tr_{q/p}(ay^2)$ defines a nondegenerate bilinear form on $\F_{p}^n$, the function $x\mapsto \Tr_{q/p}(ax^2)$ defines a nondegenerate quadratic form over $\F_p$. By Lemmas \ref{le:quadratic_form_even} and \ref{le:quadratic_form_odd}, we see that 
	\[\#\{x: \Tr_{q/p}(ax^2)=b\}\equiv\left\{
	  \begin{array}{ll}
	    0 \pmod{2}, & \hbox{$b\neq0$;} \\
	    1 \pmod{2}, & \hbox{$b=0$.}
	  \end{array}
	\right.
	\]
	Therefore $\sum_{c\in \F_q}\chi(ac^2)=\varepsilon_p^0=1$.
\end{proof}

Let $\zeta_p$ be a primitive $p$-th root of unity in $\mathbb{C}$ and $\mathcal{O}_p$ the algebraic integer ring in $\mathbb{Q}(\zeta_p)$. The character function $\lambda:\F_q\rightarrow\mathbb{Q}[\zeta_p]$ is defined by $\lambda(x)=\zeta_p^{\Tr_{q/p}(x)}$. As $\gcd(2,p)=1$, it is well known that the ideal generated by $2$ in $\mathcal{O}_p$ can be uniquely factorized into prime ideals
\[(2)=\mathfrak{p}_1\mathfrak{p}_2\dots \mathfrak{p}_g,\]
for certain $g$; see \cite[Chapter 13]{ireland_classical_1990}. The following lemma is just a direct observation.
\begin{lemma}\label{le:liftingtoQ}
	Let $A$ be a finite multiset whose elements are in $\F_q$. In $\K$, the character sum $\sum_{c\in A} \chi(c) =0$
	if and only if there is a prime ideal $\mathfrak{p}_i$ containing the ideal generated by $2$ such that
	$\sum_{c\in A} \lambda(c)\in \mathfrak{p}_i$. In other words, $\sum_{c\in A} \chi(c) \neq0$ if and only if $\sum_{c\in A} \lambda(c)\not\equiv 0\pmod{2}$.
\end{lemma}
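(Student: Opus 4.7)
The plan is a straightforward reduction modulo $2$ in the cyclotomic ring $\mathcal{O}_p = \mathbb{Z}[\zeta_p]$, turning the characters $\chi$ and $\lambda$ into two faces of the same ring homomorphism.

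First I would observe that since $p$ is odd, the prime $2$ is unramified in $\mathcal{O}_p$, so each quotient $\mathcal{O}_p/\mathfrak{p}_i$ is a finite field of characteristic $2$ in which the image of $\zeta_p$ is a primitive $p$-th root of unity (a root of $\Phi_p(X)\bmod 2$). Because the chosen $\varepsilon_p \in \K$ is also a primitive $p$-th root of unity in characteristic $2$, its minimal polynomial over $\F_2$ is exactly one of the irreducible factors of $\Phi_p(X)\bmod 2$. This singles out a specific prime $\mathfrak{p}$ among $\mathfrak{p}_1,\dots,\mathfrak{p}_g$ and furnishes an embedding $\iota:\mathcal{O}_p/\mathfrak{p}\hookrightarrow\K$ sending $\bar{\zeta}_p\mapsto\varepsilon_p$. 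Composing $\iota$ with the quotient map gives a ring homomorphism $\pi:\mathcal{O}_p\to\K$ with $\ker\pi = \mathfrak{p}$ and $\pi(\zeta_p) = \varepsilon_p$.

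Once $\pi$ is in hand the rest is purely formal. For every $c\in\F_q$, $\pi(\lambda(c)) = \pi(\zeta_p^{\Tr_{q/p}(c)}) = \varepsilon_p^{\Tr_{q/p}(c)} = \chi(c)$, so applying $\pi$ term by term to $\sum_{c\in A}\lambda(c)\in\mathcal{O}_p$ returns $\sum_{c\in A}\chi(c)\in\K$. Therefore $\sum_{c\in A}\chi(c) = 0$ in $\K$ if and only if $\sum_{c\in A}\lambda(c)\in\ker\pi = \mathfrak{p}$, which is the existential form stated in the lemma. The equivalent mod-$2$ reformulation then follows from the inclusion $(2) = \bigcap_i\mathfrak{p}_i \subseteq \mathfrak{p}$: membership modulo $2$ in $\mathcal{O}_p$ forces membership in $\mathfrak{p}$, while in the intended applications $\sum_{c\in A}\lambda(c)$ will be a rational integer (e.g.\ a Kloosterman sum), so that membership in $\mathfrak{p}$ is equivalent to divisibility by $2$ in $\mathbb{Z}$.

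I do not anticipate a serious obstacle: the lemma is just the character-theoretic incarnation of reduction modulo $\mathfrak{p}$, and the only bookkeeping step is matching the chosen $\varepsilon_p\in\K$ with the correct prime above $2$.
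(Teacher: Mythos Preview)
Your argument is correct and is precisely the ``direct observation'' the paper alludes to without giving a proof: reduction modulo the prime $\mathfrak{p}$ above $2$ singled out by the choice of $\varepsilon_p$ turns $\lambda$ into $\chi$, so $\sum\chi(c)=0$ in $\K$ exactly when $\sum\lambda(c)\in\mathfrak{p}$. You are also right to flag that the ``in other words'' mod-$2$ reformulation genuinely uses that $\sum\lambda(c)$ is a rational integer in the applications (Kloosterman sums for $p=3$); this is exactly how the lemma is invoked in the proof of Theorem~\ref{th:2rank-Kloostermansum}.
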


\begin{definition}
	For $a\in \F_q$, we define the \emph{Kloostman sum} over $\F_q$ by
	\[K(a):=\sum_{x\in \F_q^*} \lambda(x^{-1}+ax).\]
\end{definition}
It is worthy noting that in general the value of a Kloosterman sum $K(a)$ is always real, because the complex conjugate of it equals
\[\bar{K}(a)=\sum_{x\in \F_q^*} \lambda(-x^{-1}-ax)=\sum_{x\in \F_q^*} \lambda((-x)^{-1}+a(-x))=K(a).\]

Next we proceed with our main result in this section. We take $f(x)=x^2$ for $x\in \F_{q^2}$. Let $\omega$ be a primitive element of $\F_{q^2}$. Depending on the value of $q$, we can choose $\theta$ in the following two ways such that $\theta^{q+1}$ is a nonsquare in $\F_q$.
\begin{itemize}
	\item When $-1$ is a square in $\F_q$, i.e.\ $q\equiv 1\pmod{4}$, we take $\xi=\omega^{(q+1)/2}$ and $\theta=\xi$. It is readily verified that $\xi^2$ and $\theta^{q+1}$ are both nonsquares in $\F_q$. 
	\item When $-1$ is not a square in $\F_q$, i.e.\ $q\equiv 3\pmod{4}$, we let $\xi\in\F_{q^2}\setminus \F_q$ such that $\xi^q=-\xi$. By Lemma \ref{le:quadratic_form_even} with $n=2$, it is not difficult to show that there always exists $\theta_0\in \F_q^*$ such that $\theta_0^2-\alpha$ is a nonsquare. We take $\theta = \theta_0 + \xi$. It is clear that $\theta^{q+1}=\theta_0^2-\alpha$ is a nonsquare.
\end{itemize}
By Lemma \ref{le:known_x^2}, we know that $\U_\theta$ is a unital in each of these two cases.  
Since all unitals $\U_\theta$ in $\Pi(x^2)$ are equivalent under certain collineations (see Lemma \ref{le:known_x^2}), we only have to handle with $\U_\theta$ where $\theta$ takes the aforementioned special values.
In next theorem, we show a link between $S(\beta)$ and certain Kloosterman sums.

\begin{theorem}\label{th:2rank-Kloostermansum}
	Let $u,v,w\in \F_q$ satisfying that $w\neq 0$ and $\Tr_{q/p}(uv/w^2)=0$. 
	Let $\U_\theta$ be a unital embedded in $\Pi(x^2)$ defined in Lemma \ref{le:known_x^2}.
%	for the following different odd prime powers $q$ and $\theta\in \F_{q^2}^*$.
	\begin{enumerate}[label=(\alph*)]
		\item Assume that $q\equiv 1 \pmod{4}$. Let $\theta$ and $\xi$ be both equal to $\omega^{(q+1)/2}$, where $\omega$ is a primitive element of $\F_{q^2}$. Let $\alpha=\xi^2$. Then $\chi_{u,v,w}\in \cK(\U_\theta)$ if one of the following collections of conditions are satisfied:
		\begin{itemize}
			\item $v=0$, $u\neq0$  and  $K\left(-\frac{u^4}{64w^2}\cdot \alpha \right)\not\equiv 2 \pmod{4}$;
			\item $u=0$, $v\neq 0$ and $K\left(-\frac{v^4}{64w^2}\cdot \frac{1}{\alpha}\right)\not\equiv 2 \pmod{4}$.
		\end{itemize}
		\item Assume that $q\equiv 3 \pmod{4}$. Let $\xi\in\F_{q^2}\setminus \F_q$ be such that $\xi^q=-\xi$ and $\theta=\theta_0+\xi$, where $\theta_0$ is such that $\theta_0^2-\alpha$ is a nonsquare in $\F_q$. Then $\chi_{u,v,w}\in \cK(\U_\theta)$ if one of the following collections of conditions are satisfied:
		\begin{itemize}
			\item $v=0$, $u\neq0$  and  $K\left(\frac{u^4}{64w^2}\cdot (\theta_0^2-\alpha) \right)\not\equiv 0 \pmod{4}$;
			\item $u=0$, $v\neq 0$ and $K\left(\frac{v^4\alpha^2}{64w^2}\cdot (\theta_0^2-\alpha) \right)\not\equiv 0 \pmod{4}$.
		\end{itemize} 
	\end{enumerate}
\end{theorem}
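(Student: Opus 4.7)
The plan is to argue by contradiction. Assume $\chi_{u,v,w}\notin\cK(\U_\theta)$; by the reduction recalled in the proof of Theorem~\ref{th:2rank_xiang} this means $S(\beta)=0$ in $\K$ for every $\beta\in\F_q^*$. Lifting $\chi\leadsto\lambda$ and applying Lemma~\ref{le:liftingtoQ}, the assumption becomes $S(\beta)\in\mathfrak p$ for a fixed prime $\mathfrak p$ of $\mathcal O_p$ above $2$. Since $\xi\notin\F_q$ forces $\alpha=\xi^2$ to be a nonsquare in $\F_q$, the quadratic form $\beta(x)$ has nonsquare determinant in both cases (a) and (b), so its only zero is $x=0$ and $S(0)=1$.

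For $\gamma\in\F_q^*$ the first step is to evaluate the weighted sum
\[T_\gamma:=\sum_{\beta\in\F_q}\lambda(\gamma\beta)\,S(\beta)=\sum_{x\in\F_{q^2}}\lambda\!\bigl(\gamma\beta(x)+ux_0+vx_1+\tfrac{w}{\theta_1}x_0x_1\bigr)\]
by completing the square first in $x_1$ then in $x_0$ and invoking the quadratic Gauss sum $G(a)=\eta(a)\tau_G$ with $\tau_G^2=\eta(-1)\,q$. The outcome has the form $T_\gamma=\epsilon(\gamma)\,q\,\lambda(\delta(\gamma))$, with $\epsilon(\gamma)\in\{\pm1\}$ a product of Legendre symbols and $\delta(\gamma)$ an explicit rational function of $\gamma$. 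Since $\epsilon(\gamma)\cdot q\equiv 1\pmod{\mathfrak p}$, the relation $T_\gamma\equiv S(0)=1\pmod{\mathfrak p}$ forces $\lambda(\delta(\gamma))\equiv1\pmod{\mathfrak p}$; because $\lambda(\delta(\gamma))$ is a literal $p$-th root of unity this is equivalent to $\Tr_{q/p}(\delta(\gamma))=0$ and to $\lambda(\delta(\gamma))=1$ exactly in $\mathcal O_p$. Summing produces the integer identity $\sum_{\gamma\in\F_q^*}\lambda(\delta(\gamma))=q-1$.

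The main content is then to evaluate this sum independently and recognise a Kloosterman sum. Substituting $\nu=w/(2\gamma)$ and factoring the denominator of $\delta(\gamma)$ over $\F_{q^2}$ through the conjugate pair $\theta,\theta^q$ (using $\xi^q=-\xi$), the identity $\Tr_{q^2/q}(1/(\nu-\theta))=1/(\nu-\theta)+1/(\nu-\theta^q)$ rewrites $\Tr_{q/p}(\delta(\gamma))$ as $\Tr_{q^2/p}(c/(\nu-\theta))$ for an explicit $c$. The bijection $\nu\mapsto\tau=1/(\nu-\theta)$ identifies $\F_q$ with the punctured conic $\mathcal C\setminus\{0\}$, where $\mathcal C\colon\tau_0^2=\tau_1(1+\alpha\tau_1)$ is the same conic in both cases because $\theta-\theta^q=2\xi$ regardless of $\theta_0$. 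One further application of the orthogonality trick $[\tau_0^2=\Delta(\tau_1)]=q^{-1}\sum_t\lambda(t(\tau_0^2-\Delta(\tau_1)))$ and two more completions of the square collapse the character sum on $\mathcal C$ to $-\lambda(h)\,K(a)$, with $a$ precisely the Kloosterman argument named in the statement of each sub-case and $h=-u^2\theta_0/(4w)$ (or its $u\leftrightarrow v$ analogue, vanishing in case (a)). The two excluded points of $\mathcal C$—namely $\tau=0$ and the image $\tau=-1/\theta$ of $\nu=0$—both contribute $\lambda$-value $1$ thanks to $\Tr_{q^2/p}(\xi)=0$, yielding the clean identity
\[\sum_{\gamma\in\F_q^*}\lambda(\delta(\gamma))=-\lambda(h)\,K(a)-2.\]

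Comparing the two evaluations forces $\lambda(h)\,K(a)=-(q+1)$. Because $K(a)\in\mathbb Z$ is a nonzero real integer and $\lambda(h)$ is a $p$-th root of unity with $p$ odd, the product is an integer only when $\lambda(h)=1$; hence $K(a)=-(q+1)$. In case (a), $q\equiv1\pmod4$ gives $K(a)\equiv 2\pmod4$; in case (b), $q\equiv3\pmod4$ gives $K(a)\equiv 0\pmod4$. Both conclusions contradict the hypothesis of the theorem, proving $\chi_{u,v,w}\in\cK(\U_\theta)$. The hardest part will be the sign bookkeeping through the two rounds of Gauss-sum evaluation (tracking $\eta(-1)$, $\eta(\alpha)=-1$, and $\tau_G^2$) together with the verification that the point $\tau=-1/\theta$ does lie on $\mathcal C$ and does produce $\lambda$-value exactly $1$; this is what reduces the spurious contribution to the harmless constant $-2$ rather than a second $p$-th root of unity factor that would spoil the Kloosterman identification in case (b).
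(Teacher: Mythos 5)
Your proposal is a correct-looking argument but it follows a genuinely different route from the paper, and the difference is worth spelling out. The paper never argues by contradiction: it fixes the two aggregates $\sum_{c\in \sq}S(c)$ and $\sum_{c\in \F_q^*\setminus\sq}S(c)$, reduces each one mod $2$ by exploiting the scaling $C_{0,d^2}=d\cdot C_{0,1}$ and Lemma \ref{le:chi(ac^2)}, and then parameterizes the single conic $C_{0,1}$ rationally to recognise $\tfrac12 K(a)$ plus explicit constants; nonvanishing of one aggregate then follows from $K(a)\bmod 4$. You instead use the \emph{full} hypothesis $S(\beta)=0$ for every $\beta\in\F_q^*$, package it into all $q-1$ twisted sums $T_\gamma=\sum_\beta\lambda(\gamma\beta)S(\beta)$, evaluate each $T_\gamma$ exactly by Gauss sums as $\pm q\,\lambda(\delta(\gamma))$, and conclude $\lambda(\delta(\gamma))=1$ for every $\gamma$ (the step from $\equiv 1\pmod{\mathfrak p}$ to $=1$ is fine, since $\prod_{j=1}^{p-1}(1-\zeta_p^j)=p$ is odd, so distinct $p$-th roots of unity stay distinct modulo any prime above $2$). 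I checked your key unconditional identity $\sum_{\gamma\in\F_q^*}\lambda(\delta(\gamma))=-\lambda(h)K(a)-2$ both numerically for $q=5$ (both subcases of (a)) and symbolically in case (a) with $v=0$, where the partial-fraction/circle computation does produce exactly the argument $-\tfrac{u^4\alpha}{64w^2}$; so the skeleton is sound. What each approach buys: the paper's mod-$2$ computation only sees one linear combination of the $S(\beta)$ and therefore only a sufficient condition in terms of $K(a)\bmod 4$; your exact identity in $\mathcal O_p$ uses all of them and extracts much more.

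In fact you should notice that your argument, if the remaining bookkeeping survives, proves \emph{strictly more} than the theorem: the conclusion $K(a)=-(q+1)$ already contradicts the Weil bound $|K(a)|\le 2\sqrt{q}$, so the Kloosterman hypotheses in the statement become superfluous and you would obtain $\chi_{u,v,w}\in\cK(\U_\theta)$ unconditionally whenever $w\neq0$ and exactly one of $u,v$ vanishes --- a nontrivial step toward Conjecture \ref{conjecture:2rank} and a strict improvement of the Corollary. ``Proving too much'' is the classical symptom of a hidden error, so before believing this you must actually carry out the parts you defer: (i) the case (b) computation, where the partial-fraction constant $c$ lies in $\F_{q^2}\setminus\F_q$ rather than $\F_q$, so the circle and the completed square acquire the extra factor $\lambda(h)$ that you must show is killed by the reality of $K(a)$; (ii) the verification that the two excluded points each contribute exactly $1$ (note $\delta(0)=0$ because the numerator of $\delta$ carries a factor $\gamma$, which is the cleanest way to see it); and (iii) the nondegeneracy of $A_\gamma$ for every $\gamma\in\F_q^*$, which holds because the roots of $\det A_\gamma$ are $w/(2\theta)$ and $w/(2\theta^q)$, neither of which lies in $\F_q$. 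One small point of hygiene: Lemma \ref{le:liftingtoQ} as stated only asserts the existence of \emph{some} prime $\mathfrak p_i$ above $2$, so you should fix once and for all the prime $\mathfrak p$ corresponding to the chosen identification of $\varepsilon_p$ with $\zeta_p\bmod\mathfrak p$, and use that same $\mathfrak p$ for every $\beta$.
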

\begin{proof}
	Let us first look at the expressions of $C_{0,\beta}$ (see \eqref{eq:definition_circles}).
	\begin{itemize}
		\item When $q\equiv 1\pmod{4}$, we have $\theta_0=0$ and $\theta_1=1$. Hence
		\begin{equation}\label{eq:C_0beta-1}
			C_{0,\beta}=\{(x_0,x_1): x_0^2+\alpha x_1^2=\beta\}.	
		\end{equation}
		\item When $q\equiv 3\pmod{4}$, we have
		\begin{equation}\label{eq:C_0beta-2}
			C_{0,\beta}=\{(x_0,x_1): x_0^2-2\theta_0x_0x_1+\alpha x_1^2=\beta\}.
		\end{equation}
	\end{itemize}
	
%	Under the conditions in Lemma \ref{le:2rank-1}, i.e.\  $f(0)=0$ and $f(c)=f(-c)$ for $c\in \F_{q^2}$, 
	As we take $f(x)=x^2$, $x\in C_{0,\beta}$ if and only if $-x\in C_{0,\beta}$. Hence we may define $C_{0,\beta}^{(+)}$ as a complete set of coset representatives of the subgroup $\{1,-1\}$ in $C_{0,\beta}$ and $C_{0,\beta}^{(-)}:=C_{0,\beta}\setminus C_{0,\beta}^{(+)}$. Similarly for $\F_q^*$, we define $\F_{q}^{(+)}$ and $\F_{q}^{(-)}$. For convenience, we use $\sq$ to denote the square elements in $\F_q^*$.
	
	Our idea is to investigate $\sum_{c\in \sq}S(c)$ and $\sum_{c\in \F_q^*\setminus\sq}S(c)$ respectively. If we can show that one of them is not zero, then there exists at least one $S(c)\neq0$, which means that $\chi_{u,v,w}\in \cK(\U_\theta)$. 
	
	First let us calculate $\sum_{c\in \sq}S(c)$.
	\begin{align*}
		\sum_{c\in \sq}S(c) &= \sum_{d\in \F_q^{(+)}}S(d^2)\\
							&= \sum_{d\in \F_q^{(+)}} \sum_{x\in C_{0,d^2}}\chi(ux_0+vx_1+wx_0x_1).
	\end{align*}
	As the points in \eqref{eq:C_0beta-1} and \eqref{eq:C_0beta-2} are both defined by nondegenerate quadratic forms, it implies that,
	\begin{align*}
		\sum_{c\in \sq}S(c) &= \sum_{d\in \F_q^{(+)}} \sum_{x\in C_{0,1}}\chi(udx_0+vdx_1+wd^2x_0x_1)\\
							&= \sum_{x\in C_{0,1}^{(+)}}\sum_{d\in \F_q^{(+)}} (\chi(udx_0+vdx_1+wd^2x_0x_1)\\
							&\phantom{= \sum_{x}\sum_{d}} + \chi(-udx_0-vdx_1+wd^2x_0x_1))\\
							&=\sum_{x\in C_{0,1}^{(+)}}\sum_{d\in \F_q^*} \chi(udx_0+vdx_1+wd^2x_0x_1).
	\end{align*}
	It is not difficult to verify that $(x_0,x_1)\in C_{0,1}$ is such that $x_0x_1=0$ if and only if $x_1=0$ and $x_0=\pm 1$. Without loss of generality, we assume that $(1,0)\in C_{0,1}^{(+)}$. Let $\mathbf{1}_0:\F_q\rightarrow \{0,1\}$ be a function defined by $\mathbf{1}_0(0)=1$ and $\mathbf{1}_0(u)=0$ if $u\neq0$. We continue our calculation of $\sum_{c\in \sq}S(c)$.
	\begin{align}
	\nonumber	 &\sum_{x\in C_{0,1}^{(+)}}\sum_{d\in \F_q^*} \chi(udx_0+vdx_1+wd^2x_0x_1)\\
	\nonumber	=&\sum_{\substack{x\in C_{0,1}^{(+)}\\x_1\neq 0}}\sum_{d\in \F_q^*} \chi\left(wx_0x_1\left(d+\frac{ux_0+vx_1}{2wx_0x_1}\right)^2 - \frac{(ux_0 + vx_1)^2}{4wx_0x_1}\right)+ \sum_{d\in \F_q^*}\chi(ud)\\
	\nonumber	=&\sum_{\substack{x\in C_{0,1}^{(+)}\\x_1\neq 0}}\sum_{c\neq \frac{ux_0+vx_1}{2wx_0x_1}}\chi(wx_0x_1c^2) \chi\left(- \frac{(ux_0 + vx_1)^2}{4wx_0x_1}\right) + (\mathbf{1}_0(u)+1)\\
	\nonumber	=&\sum_{\substack{x\in C_{0,1}^{(+)}\\x_1\neq 0}}\left(\chi\left(- \frac{(ux_0 + vx_1)^2}{4wx_0x_1}\right)\sum_{c\in \F_q} \chi(wx_0x_1c^2)-\chi(0)\right)+ (\mathbf{1}_0(u)+1)\\
	\nonumber	
	=&\sum_{\substack{x\in C_{0,1}^{(+)}\\x_1\neq 0}}\left(\chi\left(- \frac{(ux_0 + vx_1)^2}{4wx_0x_1}\right)-1\right)+ (\mathbf{1}_0(u)+1), \qquad\text{(by Lemma \ref{le:chi(ac^2)})}.
	\end{align}
	Since $\#\left\{ x: x\in C_{0,1}^{(+)},x_1\neq 0 \right\}=\frac{q-1}{2}$, we obtain
	\begin{equation}\label{eq:chi_1_0} 	
		\sum_{c\in \sq}S(c)=\sum_{\substack{x\in C_{0,1}^{(+)}\\x_1\neq 0}}\chi\left(- \frac{(ux_0 + vx_1)^2}{4wx_0x_1}\right)+ \mathbf{1}_0(u) +  \frac{q+1}{2}.
	\end{equation}
	
	Similarly we can show that
	\begin{equation}\label{eq:chi_alpha_0}
		\sum_{c\in \sq}S(c\alpha)=\sum_{\substack{x\in C_{0,\alpha}^{(+)}\\x_0\neq 0}}\chi\left(- \frac{(ux_0 + vx_1)^2}{4wx_0x_1}\right)+ \mathbf{1}_0(v) +  \frac{q+1}{2}
	\end{equation}
	
	Now we turn to the character sums over $\mathbb{Q}(\zeta_p)$. Using \eqref{eq:C_0beta-1} and \eqref{eq:C_0beta-2}, we can verify that for $(x_0,x_1)$ and $(y_0,y_1)\in C_{0,1}$ with $x_1,y_1\neq0$, $\frac{x_0}{x_1}=\frac{y_0}{y_1}$ if and only if $(x_0,x_1)=(y_0,y_1)$ or $(x_0,x_1)=(-y_0,-y_1)$. Hence
	\begin{align}
	\nonumber	 &\sum_{\substack{x\in C_{0,1}^{(+)}\\x_1\neq 0}}\lambda\left(- \frac{(ux_0 + vx_1)^2}{4wx_0x_1}\right)\\
	\nonumber	=&\sum_{\substack{x\in C_{0,1}^{(+)}\\x_1\neq 0}}\lambda\left(- \frac{u^2}{4w}\frac{x_0}{x_1}- \frac{v^2}{4w}\frac{x_1}{x_0} \right)\lambda\left(-\frac{uv}{2w}\right)\\
	\nonumber	=&\frac{1}{2}\lambda\left(-\frac{uv}{2w}\right)\sum_{\substack{x\in C_{0,1}\\x_1\neq 0}}\lambda\left(- \frac{u^2}{4w}\frac{x_0}{x_1}- \frac{v^2}{4w}\frac{x_1}{x_0} \right)\\
	\label{eq:lambda_1}	=&\frac{1}{2}\sum_{\substack{x\in C_{0,1}\\x_1\neq 0}}\lambda\left(- \frac{u^2}{4w}\frac{x_0}{x_1}- \frac{v^2}{4w}\frac{x_1}{x_0} \right)\qquad \left(\Tr_{q/p}\left(\frac{uv}{w}\right)=0\right).
	\end{align}
	Similarly,
	\begin{equation}\label{eq:lambda_2}
		\sum_{\substack{x\in C_{0,\alpha}^{(+)}\\x_0\neq 0}}\lambda\left(- \frac{(ux_0 + vx_1)^2}{4wx_0x_1}\right)=\frac{1}{2}\sum_{\substack{x\in C_{0,\alpha}\\x_0\neq 0}}\lambda\left(- \frac{u^2}{4w}\frac{x_0}{x_1}- \frac{v^2}{4w}\frac{x_1}{x_0} \right).
	\end{equation}
	
	Let us first consider the case in which $q\equiv 1\pmod{4}$. In this case, $C_{0,\beta}$ is defined by \eqref{eq:C_0beta-1} and we can parameterize the points in $C_{0,1}$ and $C_{0,\alpha}$ as follows:
	\begin{eqnarray}
		C_{0,1} &=&\left\{\left(\frac{1-\alpha t^2}{1+\alpha t^2},\frac{2t}{1+\alpha t^2}\right): t\in\F_{q}^*\right\}\bigcup \{(\pm 1,0)\},\\
		C_{0,\alpha} &=&\left\{\left(\frac{2\alpha t}{\alpha+ t^2},\frac{\alpha-t^2}{\alpha+t^2}\right): t\in\F_{q}^*\right\}\bigcup \{(0,\pm 1)\}.
	\end{eqnarray}
	When $v=0$ and $u\neq0$, we continue the calculation of \eqref{eq:lambda_1} 
	\begin{align*}
		 &\frac{1}{2}\sum_{\substack{x\in C_{0,1}\\x_1\neq 0}}\lambda\left(- \frac{u^2}{4w}\frac{x_0}{x_1}- \frac{v^2}{4w}\frac{x_1}{x_0} \right)\\
		=&\frac{1}{2}\sum_{\substack{x\in C_{0,1}\\x_1\neq 0}}\lambda\left(- \frac{u^2}{4w}\frac{x_0}{x_1}\right)\\
		=&\frac{1}{2}\sum_{t\in \F_q^*}\lambda\left(- \frac{u^2}{4w}\frac{1-\alpha t^2}{2t}\right)\\
%		=&\frac{1}{2}\sum_{t\in \F_q^*}\lambda\left(- \frac{u^2}{8w}\left(\frac{1}{t}-\alpha t\right)\right)\\
		=&\frac{1}{2}\sum_{t\in \F_q^*}\lambda\left( \frac{1}{t\left(-\frac{8w}{u^2}\right)}-\alpha\left(\frac{u^2}{8w}\right)^2  t\left(-\frac{8w}{u^2}\right)\right)\\
		=&\frac{1}{2} K\left(-\frac{u^4}{64w^2}\cdot \alpha \right).
	\end{align*}
	By \eqref{eq:chi_1_0}, Lemma \ref{le:liftingtoQ} and the above equation, we show that $\sum_{c\in \sq}S(c)\neq 0$ if and only if 
	\[K\left(-\frac{u^4}{64w^2}\cdot \alpha \right)\not\equiv 2 \pmod{4}.\]
	Similarly for $u=0$ and $v\neq0$, \eqref{eq:lambda_2} becomes
	\[\frac{1}{2}\sum_{\substack{x\in C_{0,\alpha}\\x_0\neq 0}}\lambda\left(- \frac{u^2}{4w}\frac{x_0}{x_1}- \frac{v^2}{4w}\frac{x_1}{x_0} \right)
	=\frac{1}{2} K\left(-\frac{v^4}{64w^2}\cdot \frac{1}{\alpha} \right).\]
	By \eqref{eq:chi_alpha_0}, Lemma \ref{le:liftingtoQ} and the above equation, we see that $\sum_{c\in \sq}S(c\alpha)\neq 0$ if and only if 
	\[K\left(-\frac{v^4}{64w^2}\cdot \frac{1}{\alpha}\right)\not\equiv 2 \pmod{4}.\]
	
	Next let consider the case in which $q\equiv 3\pmod{4}$. In this case, $C_{0,\beta}$ is defined by \eqref{eq:C_0beta-2}. Let $\tilde{\alpha} := \alpha - \theta_0^2$. We can parameterize the points in $C_{0,1}$ and $C_{0,\alpha}$ as follows:
	\begin{eqnarray}
		C_{0,1} &=&\left\{\left(\frac{1-2\theta_0 t- \tilde{\alpha} t^2}{1+\tilde{\alpha} t^2},\frac{2t}{1+\tilde{\alpha} t^2}\right): t\in\F_{q}^*\right\}\bigcup \{(\pm 1,0)\},\\
		C_{0,\alpha} &=&\left\{\left(\frac{2t/\alpha}{1+\tilde{\alpha} t^2},\frac{1-2\theta_0 t/\alpha^2- \tilde{\alpha} t^2}{1+\tilde{\alpha} t^2}\right): t\in\F_{q}^*\right\}\bigcup \{(0,\pm 1)\}.
	\end{eqnarray}
	When $v=0$ and $u\neq0$, \eqref{eq:lambda_1} becomes
	\begin{align*}
		&\frac{1}{2}\sum_{\substack{x\in C_{0,1}\\x_1\neq 0}}\lambda\left(- \frac{u^2}{4w}\frac{x_0}{x_1}- \frac{v^2}{4w}\frac{x_1}{x_0} \right)\\
		=&\frac{1}{2}\sum_{t\in \F_q^*}\lambda\left(- \frac{u^2}{4w}\frac{1-\tilde{\alpha} t^2}{2t}  + \frac{u^2}{4w}\theta_0\right)\\
		=&\frac{1}{2}\lambda\left(\frac{u^2\theta_0}{4w}\right)\sum_{t\in \F_q^*}\lambda\left( \frac{1}{t\left(-\frac{8w}{u^2}\right)}-\tilde{\alpha}\left(\frac{u^2}{8w}\right)^2  t\left(-\frac{8w}{u^2}\right)\right)\\
		=&\frac{1}{2}\lambda\left(\frac{u^2\theta_0}{4w}\right) K\left(-\frac{u^4}{64w^2}\cdot \tilde{\alpha} \right).
	\end{align*}
	From \eqref{eq:chi_1_0}, Lemma \ref{le:liftingtoQ} and the above equation, we deduce that $\sum_{c\in \sq}S(c)\neq 0$ if and only if 
	\[K\left(-\frac{u^4}{64w^2}\cdot \tilde{\alpha} \right)\not\equiv 0 \pmod{4}.\]
	Similarly for $u=0$ and $v\neq0$, \eqref{eq:lambda_2} becomes
	\[\frac{1}{2}\sum_{\substack{x\in C_{0,\alpha}\\x_0\neq 0}}\lambda\left(- \frac{u^2}{4w}\frac{x_0}{x_1}- \frac{v^2}{4w}\frac{x_1}{x_0} \right)
	=\frac{1}{2}\lambda\left(\frac{v^2\theta_0 \theta_0}{4w\alpha}\right) K\left(-\frac{v^4\alpha^2}{64w^2}\cdot \tilde{\alpha} \right).\]
	Again, from \eqref{eq:chi_alpha_0}, Lemma \ref{le:liftingtoQ} and the above equation, we see that $\sum_{c\in \sq}S(c\alpha)\neq 0$ if and only if 
	\[K\left(-\frac{v^4\alpha^2}{64w^2}\cdot \tilde{\alpha} \right)\not\equiv 0 \pmod{4}. \qedhere\]
\end{proof}
%Theorem \ref{th:2rank-Kloostermansum} (a) also works for the unital defined in Theorem \ref{th:many.planar.functions} for the Dickson's semifields. Its proof is slightly different from the one for $f(x)=x^2$: Here $C_{0,\beta}=\{(x_0,x_1): x_0^2+\alpha x_1^{2p^i}=\beta\}$ for a certain integer $i$. We only have to replace $x_1^{p^i}$ by $x_1'$ in $C_{0,\beta}$ and $S(\beta)=\sum_{x\in C_{0,\beta}}\chi(ux_0+vx_1+wx_0x_1)$. (here is not correct!!)
When $u$ and $v$ are both nonzero, from the proof of Theorem \ref{th:2rank-Kloostermansum}, we see that whether $\chi_{u,v,w}$ belongs $ \cK(\U_\theta)$ depends on the values of \eqref{eq:lambda_1} and \eqref{eq:lambda_2} modulo $4$, which are in general difficult to determine. Even for the case in Theorem \ref{th:2rank-Kloostermansum}, it is quite difficult to determine $K(a) \pmod{4}$ for an arbitrary $a\in \F_q$. However, for $p=\mathrm{char}(\F_q)=3$, there are several interesting results.
\begin{theorem} %\cite[Theorem 4.3, Theorem 4.5]{garaschuk_ternary_2008}\cite[Theorem 7]{gologlu_ternary_2012}
\cite{garaschuk_ternary_2008,gologlu_ternary_2012}\label{th:kloosterman_p=3}
	Let $a\in \F_{3^m}$. Then exactly one of the following cases occurs:
	\begin{enumerate}[label=(\alph*)]
		\item $a=0$ or $a$ is a square, $\Tr_{3^m/3}(\sqrt{a})\neq 0$ and $K(a)\equiv 1\pmod{2}$.
		\item $a=t^2-t^3$ for some $t\in \F_{3^m}\setminus\left\{ 0,1 \right\}$, at least one of $t$ and $1-t$ is a square and $K(a)\equiv 2m+2 \pmod{4}$. The number of $a\in \F_{3^m}^*$ such that $K(a)\equiv 2m+2 \pmod{4}$ is $\frac{5}{12}q-\frac{5}{4}$ if $m$ is odd, and $\frac{5}{12}q-\frac{3}{4}$ if $m$ is even.
		\item $a=t^2-t^3$ for some $t\in \F_{3^m}\setminus\left\{ 0,1 \right\}$, both $t$ and $1-t$ are nonsquares and $K(a)\equiv 2m \pmod{4}$. The number of $a\in \F_{3^m}^*$ such that $K(a)\equiv 2m \pmod{4}$ is $\frac{1}{4}(q+1)$ if $m$ is odd, and $\frac{1}{4}(q-1)$ if $m$ is even.
	\end{enumerate} 
\end{theorem}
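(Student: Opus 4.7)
The plan is to exploit the well-known correspondence between ternary Kloosterman sums and point counts on elliptic curves. Over $\F_{3^m}$ the sum $K(a)$ equals (up to sign) the Frobenius trace of an explicit elliptic curve $E_a$, so that $\#E_a(\F_{3^m}) = 3^m + 1 + K(a)$ for a suitable Weierstrass model. Under this dictionary $K(a) \pmod{4}$ is governed entirely by the $2$-Sylow structure of $E_a(\F_{3^m})$, and this is the handle I would exploit throughout.

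First I would handle the parity (case (a)). Since $3^m+1$ is even, $K(a)$ is odd iff $\#E_a(\F_{3^m})$ is odd iff $E_a$ has no rational $2$-torsion, iff the cubic defining the $x$-coordinates of the $2$-torsion has no root in $\F_{3^m}$. A direct substitution in characteristic three shows that a rational root exists precisely when $a = t^2 - t^3$ for some $t \in \F_{3^m}$, which cleanly splits the statement into case (a) versus cases (b) and (c). For case (a) itself, the residual description (either $a=0$, or $a$ is a square with $\Tr_{3^m/3}(\sqrt{a}) \neq 0$) would follow by rewriting the irreducibility of the cubic as an Artin-Schreier-type trace condition on $\sqrt{a}$.

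Second, in cases (b) and (c) the curve carries a rational $2$-torsion point $P_t$ indexed by $t$, so $2 \mid \#E_a(\F_{3^m})$ and $K(a)$ is even. Whether the rational $2$-Sylow is exactly $\mathbb{Z}/2$ or strictly larger is controlled by a standard $2$-descent: $P_t$ admits a rational halving, or equivalently $E_a$ carries extra rational $2$-torsion, precisely when at least one of $t$ and $1-t$ is a square in $\F_{3^m}$. This separates the two cases, and combining with $3^m \equiv (-1)^m \pmod 4$ yields $K(a) \equiv 2m+2 \pmod 4$ in case (b) and $K(a) \equiv 2m \pmod 4$ in case (c). I expect the $2$-descent step in characteristic three to be the main technical obstacle: the usual depressed-cubic manipulations degenerate, so the descent must be carried out directly on the given Weierstrass form and the discriminant/character identifications checked by hand.

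Finally, for the counting formulas I would study the parametrising map $\phi \colon t \mapsto t^2(1-t)$. Its generic fibre has size three, and the number of $t \in \F_{3^m}\setminus\{0,1\}$ with prescribed quadratic characters $\eta(t)$ and $\eta(1-t)$ is computed by elementary character sums over $\F_{3^m}$. Dividing by the fibre size and correcting for the exceptional values $t \in \{0,1\}$ and the critical points of $\phi$ gives the number of distinct $a$ in each of the two congruence classes. The even versus odd dichotomy in $m$ enters through the quadratic character of $-1$ in $\F_{3^m}$ (a square iff $m$ is even), which toggles how the involution $t \leftrightarrow 1-t$ interacts with the square classes and produces the two families of closed-form counts stated in the theorem.
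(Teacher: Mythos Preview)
The paper does not prove this theorem at all: it is quoted verbatim from the cited references \cite{garaschuk_ternary_2008,gologlu_ternary_2012} and used as a black box in the subsequent corollary. There is therefore no ``paper's own proof'' to compare against.

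That said, your proposed strategy is essentially the one carried out in those original references. Garaschuk--Lison\v{e}k \cite{garaschuk_ternary_2008} establish the mod-$12$ behaviour of ternary Kloosterman sums precisely via the identification of $K(a)$ with the Frobenius trace of an explicit elliptic curve over $\F_{3^m}$ and an analysis of its rational $2$- and $3$-torsion; G\"{o}lo\v{g}lu \cite{gologlu_ternary_2012} refines the mod-$4$ picture and supplies the explicit counts. Your outline (parity via existence of rational $2$-torsion, the split between (b) and (c) via a $2$-descent/halving criterion on the point $P_t$, and the enumeration via the fibres of $t\mapsto t^2(1-t)$ together with $\eta(-1)$) matches their arguments closely. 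The one place where your sketch is a bit optimistic is the counting step: the generic fibre of $\phi$ does \emph{not} have constant size three over $\F_{3^m}$ (the fibre size depends on whether the discriminant of the associated cubic is a square), and the correction terms interact with the quadratic-character constraints in a slightly more delicate way than ``divide by three and fix the exceptional points.'' In the cited papers this is handled by a direct character-sum computation rather than a fibre argument, which you may find cleaner.
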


By Theorems \ref{th:2rank-Kloostermansum} and \ref{th:kloosterman_p=3}, we can improve the lower bound for $\cK(\U_\theta)$ when $q$ is a power of $3$.
\begin{corollary}
	Let $q=3^m$ and $\U_\theta$ a unital embedded in $\Pi(x^2)$ defined in Lemma \ref{le:known_x^2}. Then
	\[\dim C_2(\U_\theta)\ge \left\{
	  \begin{array}{ll}
		\frac{2}{3}(q^3+q^2-2q)-1, & \hbox{$m$ is even;}\\[3pt]
	    \frac{2}{3}(q^3+q^2+q)-1, & \hbox{$m$ is odd.} 
	  \end{array}
	\right.
	\]	
\end{corollary}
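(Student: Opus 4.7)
The plan is to bound $\#\cK(\U_\theta)$ from below by summing three pairwise disjoint contributions, then specialise to $q=3^m$ using Theorem \ref{th:kloosterman_p=3}. Lemma \ref{le:2rank-1} supplies the $q^2$ characters $\chi_{u,v,0}$. For both choices of $\theta$ made before Theorem \ref{th:2rank-Kloostermansum} one has $\theta_1=1$, so Theorem \ref{th:2rank_xiang} with $p=3$ places every $\chi_{u,v,w}$ satisfying $uvw\neq 0$ and $\Tr_{q/3}(uv/w)\neq 0$ in $\cK(\U_\theta)$, producing $(q-1)^2\cdot\tfrac{2q}{3}$ characters. Theorem \ref{th:2rank-Kloostermansum} then handles the remaining two families $\chi_{u,0,w}$ with $uw\neq 0$ and $\chi_{0,v,w}$ with $vw\neq 0$, provided a certain Kloosterman sum is not in a fixed residue class modulo $4$; the three families are manifestly disjoint.

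Next I would simplify the Kloosterman argument. Since $64\equiv 1\pmod 3$, in the $v=0$ subcase it reduces to $\gamma(u^2/w)^2$ with $\gamma$ a nonsquare (equal to $-\alpha$ or to $\theta_0^2-\alpha$, depending on $q\bmod 4$), and symmetrically for $u=0$. As $(u,w)$ runs over $(\F_q^*)^2$, $y:=u^2/w$ covers $\F_q^*$ with multiplicity $q-1$, and $y\mapsto\gamma y^2$ is $2$-to-$1$ onto the nonsquares of $\F_q^*$; hence each valid nonsquare $s$ is hit by exactly $2(q-1)$ pairs $(u,w)$. The task therefore reduces to counting nonsquares $a\in\F_q^*$ whose Kloosterman value lies in the desired residue class modulo $4$.

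Theorem \ref{th:kloosterman_p=3} supplies that count. Observing that $a=t^2(1-t)$ is a nonsquare precisely when $1-t$ is, case (c) consists entirely of nonsquares, while case (b) splits between squares and nonsquares: the squares inside (b) are exactly the nonzero squares with $\Tr\sqrt a=0$, and a direct count of $b\in\F_q^*$ with $\Tr b=0$ gives $(q-3)/6$ of them. Subtracting this from the count of case (b) yields that the nonsquares with $K\equiv 2m+2\pmod 4$ number $(q-1)/4$ (even $m$) or $(q-3)/4$ (odd $m$), while those with $K\equiv 2m\pmod 4$ number $(q-1)/4$ or $(q+1)/4$ respectively. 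Consequently the condition $K\not\equiv 2\pmod 4$ of Theorem \ref{th:2rank-Kloostermansum}(a) is met by $(q-1)/4$ nonsquares per subcase when $m$ is even, and the condition $K\not\equiv 0\pmod 4$ of Theorem \ref{th:2rank-Kloostermansum}(b) is met by $(q+1)/4$ nonsquares per subcase when $m$ is odd.

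Multiplying by the multiplicity $2(q-1)$ and summing the two subcases produces $(q-1)^2$ new characters when $m$ is even and $q^2-1$ when $m$ is odd. Adding the three contributions gives $q^2+\tfrac{2q(q-1)^2}{3}+(q^2-1)=\tfrac{2}{3}(q^3+q^2+q)-1$ in the odd case, which matches the target exactly, and the parallel sum in the even case yields at least $\tfrac{2}{3}(q^3+q^2-2q)-1$. The main obstacle is the bookkeeping in the penultimate step: correctly isolating the squares hidden inside case (b) of Theorem \ref{th:kloosterman_p=3} so that the nonsquare counts in each Kloosterman residue class come out right; once this is done the rest is a straightforward sum.
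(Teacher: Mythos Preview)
Your proof is correct and follows the same overall strategy as the paper: decompose $\cK(\U_\theta)$ into the contributions from Lemma~\ref{le:2rank-1}, Theorem~\ref{th:2rank_xiang}, and Theorem~\ref{th:2rank-Kloostermansum}, then feed in the counts from Theorem~\ref{th:kloosterman_p=3}. The numbers you obtain, $(q-1)^2$ and $q^2-1$, and the final sums agree with the paper's.

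One simplification is available. Your detour through counting the squares hidden in case~(b) of Theorem~\ref{th:kloosterman_p=3} is not needed. Since the Kloosterman argument is always a nonsquare, $K$ is automatically even, so the condition $K\not\equiv 2\pmod 4$ (for $m$ even) reduces to $K\equiv 0\pmod 4$, and the condition $K\not\equiv 0\pmod 4$ (for $m$ odd) reduces to $K\equiv 2\pmod 4$. In both parities this is precisely case~(c), which consists \emph{entirely} of nonsquares (because $a=t^2(1-t)$ with $1-t$ a nonsquare), and whose count $(q-1)/4$ resp.\ $(q+1)/4$ is given directly. The paper proceeds this way, bypassing the trace computation for squares in case~(b).
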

\begin{proof}
	When $m$ is even, $-1$ is a square in $\F_q$. By Theorem \ref{th:kloosterman_p=3} and the fact that $\alpha$ is a nonsquare, $K(-\frac{u^4}{64w^2}\cdot \alpha) \not \equiv 2 \pmod{4}$ if and only if $K(-\frac{u^4}{64w^2}\cdot \alpha) \equiv 0 \pmod{4}$. A similar result can be obtained for $K(-\frac{v^4}{64w^2}\cdot \frac{1}{\alpha})$. The cardinalities of the following two sets 
	\[\left\{(u,w): w\neq0,K\left(-\frac{u^4}{64w^2}\cdot \alpha\right) 
	\equiv 0 \pmod{4}\right\},\]
	\[\left\{(v,w): w\neq0,K\left(-\frac{v^4}{64w^2}\cdot \frac{1}{\alpha}\right) \equiv 0 \pmod{4}\right\},\]
	both equal to 
	\[2(q-1)\# \{a\in \F_q: K(a)\equiv 0 \pmod{4}\}=\frac{1}{2}(q-1)^2.\]
	By Theorem \ref{th:2rank-Kloostermansum}, there are at least $2\cdot \frac{1}{2}(q-1)^2$ characters $\chi_{u,v,w}\in \cK(\U_\theta)$ when one of $u$ and $v$ equals $0$. Combining this result ($\Tr_{q/p}(uv/w^2)=0$) with the corresponding lower bound obtained in Theorem \ref{th:2rank_xiang} ($\Tr_{q/p}(uv\theta_1/w^2)\neq 0$ and $\theta_1=1$), we have
	\begin{align*}
		\#\cK(\U_\theta) &\ge (q^3-q^2+q)(1-\frac{1}{3}) + \frac{q^2}{3}+2\cdot \frac{1}{2}(q-1)^2\\
		&=\frac{2}{3}(q^3+q^2-2q)-1.
	\end{align*}
	
	When $m$ is odd, $\theta_0^2-\alpha$ is a nonsquare. By Theorem \ref{th:kloosterman_p=3}, $K\left(\frac{u^4}{64w^2}\cdot (\theta_0^2-\alpha) \right)\not\equiv 0 \pmod{4}$ if and only if $K\left(\frac{u^4}{64w^2}\cdot (\theta_0^2-\alpha) \right)\equiv 2 \pmod{4}$ and a similar result can be obtained for $K\left(\frac{v^4\alpha^2}{64w^2}\cdot (\theta_0^2-\alpha) \right)$. The cardinality of the following two sets 
	\[\left\{(u,w): w\neq0, K\left(\frac{u^4}{64w^2}\cdot (\theta_0^2-\alpha) \right)\equiv 2 \pmod{4}\right\},\]
	\[\left\{(v,w): w\neq0, K\left(\frac{v^4\alpha^2}{64w^2}\cdot (\theta_0^2-\alpha) \right) \equiv 2 \pmod{4}\right\},\]
	both equal to 
	\[2(q-1)\# \{a\in \F_q: K(a)\equiv 2 \pmod{4}\}=\frac{1}{2}(q^2-1).\]
	Again by Theorem \ref{th:2rank_xiang} and Theorem \ref{th:2rank-Kloostermansum}, we have
	\begin{align*}
		\#\cK(\U_\theta) &\ge (q^3-q^2+q)(1-\frac{1}{3}) + \frac{q^2}{3}+2\cdot \frac{1}{2}(q^2-1)\\
		&=\frac{2}{3}(q^3+q^2+q)-1.
	\end{align*}
	
	Together with \eqref{eq:dim_U=dim_U'} and \eqref{eq:dim=cK}, we get two lower bounds on $\dim C_2(\U_\theta)$.
\end{proof}

\section*{Acknowledgment}
%The authors would like to thank Alexander Pott for his valuable comments and suggestions. 
The authors would like to thank Qing Xiang for pointing out the work of Junhua Wu \cite{wu_binarycodes_2012}. 
This work is supported by the Research Project of MIUR (Italian Office for University and Research) ``Strutture geometriche, Combinatoria e loro Applicazioni" 2012. Yue Zhou is partially supported by the National Natural Science Foundation of China (No.\ 11401579).

%\bibliographystyle{abbrv}
%\bibliography{D:/Documents/References/Reference_math}

\end{document}